\newtheorem{thm}{Theorem}[section]
\newtheorem{prop}[thm]{Proposition}
\newtheorem{lem}[thm]{Lemma}
\theoremstyle{definition}
\newtheorem{defn}[thm]{Definition}
\newtheorem{eg}[thm]{Example}
\newtheorem{conv}{Convention}
\newtheorem{rem}[thm]{Remark}
\theoremstyle{remark}
\numberwithin{equation}{subsection}
\newcommand{\s}[1]{ \displaystyle \sum_{i=1}^{#1}}
\newcommand{\h}{\mathcal{H} }
\newcommand{\C}{\mathbb{C}}
\newcommand{\A}{\mathcal{A}}
\newcommand{\G}{\mathcal{G}}
\newcommand{\M}{\mathcal{M}}
\newcommand{\N}{\mathcal{N}}
\newcommand{\inner}[2]{\left \langle {#1} , {#2} \right \rangle }
\newcommand{\unit}{\mathbb{1}}
\title{ \vspace{-1.5cm} \textbf{Spectral bounds for the quantum chromatic number of quantum graphs}}
\author{\vspace{0.5cm} PRIYANGA GANESAN\thanks{Department of Mathematics, Texas A\&M University, College Station. \textit{Email: priyanga.g@tamu.edu}} }
\date{ \vspace {-0.2cm}  November 30, 2021}
\begin{document}

\maketitle
\vspace{-0.7cm}

\begin{abstract}
Quantum graphs are an operator space generalization of classical graphs that have emerged in different branches of mathematics including operator theory, non-commutative topology and quantum information theory.
In this paper, we obtain lower bounds for the classical and quantum chromatic number of a quantum graph using eigenvalues of the quantum adjacency matrix. In particular, we prove a quantum generalization of Hoffman's bound and introduce quantum analogues for the edge number, Laplacian and signless Laplacian. We generalize all the spectral bounds of Elphick \& Wocjan \cite{EW1} to the quantum graph setting and demonstrate the tightness of these bounds in the case of complete quantum graphs.  Our results are achieved using techniques from linear algebra and a combinatorial definition of quantum graph coloring, which is obtained from the winning strategies of a quantum-to-classical nonlocal graph coloring game \cite{bgh}. 

\end{abstract}

\section{Introduction}

Graph coloring has been well-studied in mathematics since the eighteenth century and has widespread applications in day-to-day life, including scheduling problems, register allocation, radio frequency assignments and sudoku solutions \cite{coloring-applications}. Traditionally, the coloring of a graph refers to an assignment of labels (called colors) to the vertices of a graph such that no two adjacent vertices share the same color. The chromatic number of a graph is defined to be the minimum number of colors for which such an assignment is possible. 

More recently, a quantum generalization of the chromatic number was introduced within the framework of non-local games in quantum information theory \cite{cameron}.  The quantum chromatic number of a graph is defined as the minimal number of colors necessary in a quantum protocol in which two separated players, who cannot communicate with each other but share an entangled quantum state, try to convince an interrogator with certainty that they have a coloring for the given graph. There are known examples of graphs  whose quantum chromatic number is strictly smaller than its classical chromatic number \cite{mr1, cameron}, thus exhibiting the power of quantum entanglement. Quantum chromatic number of classical graphs have close connections to Tsirelson's conjecture and the Connes embedding problem and have been extensively studied in the past decade \cite{rank1_qcn, mr1, paulsen-ivan, paulsen-ivan-sev-winter-stahlke}. In general, computing the chromatic number of a graph is an NP-hard problem.
However, several lower bounds on the (quantum) chromatic number have been shown using spectral graph theory \cite{EW1} .
In this paper, we are interested in generalizing these spectral bounds to the setting of \textit{quantum graphs} and estimating the quantum chromatic number of a quantum graph. 

Quantum graphs are a non-commutative generalization of classical graphs that have attracted significant attention in recent years due to their intriguing connections with several areas of mathematics, physics and computer science. They first appeared in \cite{shulman}, and have independently emerged in other disguises thereafter. In information theory, quantum graphs were introduced as a quantum analogue of the confusability graph of classical channels \cite{dsw}. Another definition was proposed in the context of quantum relations \cite{weaver-qg}, which describes a quantum graph as a reflexive and symmetric quantum relation on a finite dimensional von-Neumann algebra. In \cite{oxford}, an equivalent perspective on quantum graphs was developed in a categorical framework for quantum functions, using a quantum adjacency matrix, and was further generalized in \cite{kari}.  In recent years, research in quantum graph theory has undergone vast developments and quantum graphs have been explored in the context of zero-error quantum information theory, quantum error correction, operator algebras, non-local games, quantum symmetries, non-commutative topology and other fields \cite{kari2, randomqg, ramsey1, ramsey2, matsuda, gromada}.
In particular, there have been multiple studies  on the coloring of quantum graphs \cite{paulsen-ortiz, mehta, stahlke, bgh, ivan-lyudmila} leading to 
different variants of the chromatic number of a quantum graph, in both the classical and quantum sense.

One recent approach developed in \cite{bgh} defines the coloring of a quantum graph using a two-player non-local game involving quantum inputs and classical outputs. This game generalizes the coloring game for classical graphs \cite{cameron} and introduces chromatic numbers of a quantum graph in different mathematical models: $loc, q, qa, qc, C^*, hered, alg$.
It was shown that the chromatic numbers defined in this framework agree nicely with other versions in the literature \cite{mr1, stahlke, mehta}, and also lead to a four-coloring theorem for quantum graphs in the algebraic model. We adopt this formalism of quantum graph coloring in the present paper. 

The goal of this paper is to obtain bounds for the quantum chromatic number of quantum graphs. 
Chromatic numbers of quantum graphs are closely related to the zero-error capacity of quantum channels \cite{dsw}. Hence, estimating these numbers is useful for the development of zero-error quantum communication.  In \cite{EW1}, the authors proved many lower bounds on the quantum chromatic number of \textit{classical graphs} using an algebraic characterization of graph coloring. 
We extend their results to the setting of quantum graphs using a combinatorial definition of quantum graph coloring developed in \cite{bgh}. 
Our approach uses the quantum adjacency matrix, defined in \cite{oxford, kari}, to associate a spectrum with the given quantum graph. We use this spectrum and techniques adapted from \cite{EW1} to achieve the spectral estimates. In this process, we naturally get lower bounds for the classical chromatic numbers of quantum graphs since the classical chromatic number is greater than or equal to the quantum chromatic number. 

Our main result can be summarized as follows:

\begin{thm} \label{main result}
Let $\G =  (\M, \psi, A, S)$ be an irreflexive quantum graph, and let $\chi(\G)$ and $\chi_q(\G)$ denote the classical and quantum chromatic numbers of $\G$ respectively. Then,
\[
1 + \max \left\{  
\dfrac{\lambda_{\max}}{|\lambda_{\min}|}, 
\dfrac{\dim(S)}{\dim(S) - \dim(\M) \gamma_{\min}}, 
\dfrac{s^\pm}{s^\mp}, 
\dfrac{n^\pm}{n^\mp} ,
\dfrac{\lambda_{\max}}{\lambda_{\max} - \gamma_{\max}+ \theta_{max}} 
\right \}
\le 
\chi_q(\G) \le \chi(\G).
\]
\end{thm}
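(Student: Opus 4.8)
The plan is to reduce all five lower bounds to a single operator identity satisfied by the quantum adjacency matrix of a $c$-colorable quantum graph, and then to read off each individual bound by a Rayleigh-quotient or trace argument. First I would invoke the combinatorial characterization of quantum coloring from \cite{bgh}: a winning strategy for the quantum-to-classical coloring game on $\G$ with $c$ colors produces a family of projections $\{E_i\}_{i=1}^{c}$, acting on $\M \otimes \B(\h)$ for an auxiliary Hilbert space $\h$, which sum to the identity, $\sum_{i=1}^{c} E_i = \unit$, and which satisfy the orthogonality relation dictated by $A$ and $S$ encoding the absence of monochromatic edges. The whole argument hinges on the fact that this orthogonality relation is precisely what forces a roots-of-unity sum to annihilate $A$.

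Concretely, I would set $\omega = e^{2\pi i/c}$ and define unitaries $U_k := \sum_{i=1}^{c} \omega^{ki}\, E_i$ for $k=1,\dots,c-1$. The central lemma to establish is the identity
\[
\widetilde{A} + \sum_{k=1}^{c-1} U_k\, \widetilde{A}\, U_k^{*} = 0,
\]
where $\widetilde{A}$ is the ampliation of the quantum adjacency matrix to the space carrying the coloring projections. In the classical case this reduces to the familiar observation that averaging the conjugates of the adjacency matrix by the diagonal coloring unitaries over all $c$-th roots of unity kills every edge, since $f(u)\neq f(v)$ on an edge makes $\sum_{k=0}^{c-1}\omega^{k(f(u)-f(v))}=0$. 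In the quantum-graph setting the orthogonality of the $E_i$ across the edge space $S$ plays exactly this role, but the coefficient bookkeeping must be carried out with respect to the inner product induced by the state $\psi$ and the action of $A$ on $\M$, which is where the real work lies.

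Granting this identity, each bound follows quickly. For Hoffman's bound I would take a unit eigenvector $x$ of $\widetilde{A}$ for $\lambda_{\max}$; since each $U_k \widetilde{A} U_k^{*}$ is unitarily conjugate to $\widetilde{A}$ and hence has spectrum contained in $[\lambda_{\min},\lambda_{\max}]$, the identity and the Rayleigh quotient give
\[
\lambda_{\max} = \inner{x}{\widetilde{A}x} = -\sum_{k=1}^{c-1}\inner{x}{U_k \widetilde{A} U_k^{*} x} \le (c-1)\,\abs{\lambda_{\min}},
\]
which rearranges to $c \ge 1 + \lambda_{\max}/\abs{\lambda_{\min}}$. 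The Laplacian, signless-Laplacian and edge-number bounds are obtained in the same manner after replacing $\widetilde{A}$ by the corresponding quantum Laplacian, quantum signless Laplacian, and degree data, and then invoking the analogous inequality on their extremal eigenvalues $\gamma_{\min},\gamma_{\max},\theta_{\max}$; the inertia-type bounds involving $s^{\pm}$ and $n^{\pm}$ use that unitary conjugation preserves both the counts $n^{\pm}$ of positive and negative eigenvalues and the partial sums $s^{\pm}$. Finally, the upper bound $\chi_q(\G)\le\chi(\G)$ is immediate, since any classical proper $c$-coloring is in particular a (deterministic) winning strategy for the quantum coloring game.

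The main obstacle I anticipate is the key identity itself. Unlike the classical case, where the coloring is a scalar function and the roots-of-unity cancellation is transparent, here I must translate the combinatorial coloring condition of \cite{bgh}, stated in terms of $A$, $S$ and $\psi$, into an honest operator-orthogonality statement for the projections $E_i$, and then verify that summing $U_k \widetilde{A} U_k^{*}$ over $k$ cancels $A$ exactly. A secondary but necessary point is to confirm that the quantum adjacency matrix is self-adjoint for the $\psi$-inner product, so that its spectrum is real and the quantities $\lambda_{\max},\lambda_{\min},\gamma_{\min},\gamma_{\max},\theta_{\max},s^{\pm},n^{\pm}$ are well defined and the Rayleigh-quotient estimates are valid.
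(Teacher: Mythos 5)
You have the right skeleton---the pinching/twirling identity as the single engine, then per-bound linear algebra---and your Hoffman derivation from the identity (Rayleigh quotient against the unitary conjugates $U^k\tilde{A}(U^*)^k$, each with spectrum in $[\lambda_{\min},\lambda_{\max}]$) is a valid route, in fact closer to Elphick--Wocjan's original argument than to this paper's, which instead uses a block-matrix lemma: for a self-adjoint $\tilde{A}$ partitioned by the coloring projections, $(c-1)\lambda_{\min}+\lambda_{\max}\le\sum_i\lambda_{\max}(\widehat{A}_{ii})=0$ since the diagonal blocks vanish. But the central identity you build everything on is left unproven, and the missing step is concrete: the coloring condition of \cite{bgh} annihilates elements of $S$, not $A$, so you must show $A\in S$. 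The paper does this in three lines using the Schur-idempotent axiom: the orthogonal bimodule projection onto $S$ has the form $P_S=\delta^{-2}m(A\otimes(\cdot))m^*$, hence $P_S(A)=\delta^{-2}m(A\otimes A)m^*=A$, so $A\in\mathrm{range}(P_S)=S$ and $\sum_a P_a(A\otimes I_{\N})P_a=0$; the twirling form then follows from the roots-of-unity computation $\sum_{k=1}^{c}U^k(X\otimes I_{\N})(U^*)^k=c\sum_a P_a(X\otimes I_{\N})P_a$. You correctly flagged this translation as ``where the real work lies,'' but flagging it is not doing it, and without it none of the five bounds gets off the ground. (Your secondary worry, self-adjointness of $A$, is handled by convention: the paper fixes $A$ to be the quantum adjacency matrix induced by the \emph{orthogonal} bimodule projection onto $S$, which is automatically self-adjoint.)

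The more serious gap is your claim that the remaining bounds follow ``in the same manner''; for at least two of them the mechanism you name would not suffice. For the edge-number bound, $\dim(S)$ must enter somewhere, and it does so only through the quantum degree matrix: the paper defines $D$ as right multiplication by $A\unit$ (the right regular representation is chosen precisely so that $D\in\M'$, hence $D\otimes I_{\N}$ commutes with $U$), proves the nontrivial trace identity $\Tr(D)=\dim(\M)\,\psi(A\unit)=\dim(S)$, rearranges the twirling identity into $A\otimes I_{\N}=(c-1)(D\otimes I_{\N})-\sum_{k=1}^{c-1}U^k(Q\otimes I_{\N})(U^*)^k$, and evaluates at the vector $\unit\otimes u$. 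Nothing in your sketch produces $\dim(S)$ or explains why the degree data commutes with the coloring unitaries. For the $s^\pm$ bound, invariance of $s^\pm$ under unitary conjugation gives nothing by itself; the actual proof splits $\tilde{A}=\tilde{B}-\tilde{C}$ into positive and negative parts and applies the Ando--Lin lemma ($\Tr(X^*X)\le(r-1)\Tr(Y^*Y)$ for conformally partitioned positive semidefinite $X,Y$ with equal diagonal blocks and $XY=0$), where the pinching fact $\widehat{A}_{ii}=0$ is what yields $\widehat{B}_{ii}=\widehat{C}_{ii}$. Similarly, the inertia bound is not a consequence of conjugation preserving $n^\pm$: one compresses the identity by the negative spectral projection $P^-$ to exhibit $\tilde{C}$ as a difference of positive operators and then uses rank subadditivity to get $(c-1)n^+\ge n^-$. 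So: correct architecture, a sound (and genuinely alternative) Hoffman argument, but the proposed uniform reduction of the other four bounds is false as stated, and each needs its own lemma of the kind just described.
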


Specifically, we prove that Hoffman's bound \cite{hoffman} holds in the case of quantum graphs. 
We also introduce quantum analogues for the edge number, inertia, Laplacian and signless Laplacian of a quantum graph along the way.
Further, we demonstrate the tightness of all the bounds in the case of irreflexive complete quantum graphs. 

\noindent 
Our paper is organized as follows:  Section \S \ref{background} provides the necessary background on quantum graphs and the connections between different perspectives. We also review the notion of quantum graph coloring and chromatic numbers in this section. Section \S \ref{pinching and twirling} introduces the spectrum of a quantum graph and develops algebraic results connecting the quantum adjacency operator to quantum graph coloring. In section \S \ref{bounds}, we prove the spectral lower bounds listed in theorem \ref{main result} for the quantum chromatic numbers of quantum graphs. 
We conclude with an illustration of the bounds in the case of complete quantum graphs in section \S \ref{application}.
The appendix \S \ref{appendix} presents a table translating the different definitions of quantum graphs.

\section{Preliminaries} \label{background}

In this section, we review some definitions and results on quantum graphs and quantum coloring which will be required for our discussion. We begin by listing some notations used in the paper.

\subsection{Notations}
\begin{itemize}[noitemsep]
\item $[n]$ denotes the discrete set $\{1,2, \ldots, n\}$.
\item $\ket{\cdot}$ denotes a column vector, while $\bra{\cdot}$ denotes its conjugate transpose.
\item $M_n$ denotes the set of all $ n \times n$ complex matrices.
\item $e_{ij}$ denotes the matrix whose $i^{th}$ row - $j^{th}$ column has entry $1$ and all other entries are 0.
\item $\Tr$ denotes the natural trace, given by summing all diagonal terms of a
matrix.
\item $B(\h)$ denotes the algebra of bounded linear operators on a Hilbert space $\h$.
\item If $T$ is a subset of an algebra $\A$, then the commutant of $T$ is denoted by $T' = \{ a \in \A: at = ta, \; \forall \; t \in T\}$.
\item The spectrum of an operator $A$ is denoted by $\sigma(A)$.
\item $G$ denotes a classical graph, $\chi(G)$ denotes the classical chromatic number of $G$ and $\chi_q(G)$ denotes the quantum chromatic number of $G$.
\end{itemize}

\subsection{Quantum graphs as operator spaces}

Quantum graphs can be defined in different ways, as mentioned in the introduction. One way to describe them is as operator spaces satisfying a certain bimodule property \cite{weaver-qg}. This approach is commonly used for studying quantum coloring problems. 
We discuss this formalism here:

\begin{defn} \label{qg1}
Let $\h$ be a complex Hilbert space and $\M \subseteq B(\h)$ be a (non-degenerate) von Neumann algebra.
A \textit{quantum graph} on $\M$ is an operator space $S \subseteq B(\h)$ that is closed under adjoint and is a bimodule over the commutant of $\M$, that is $\M' S \M' \subseteq S$. We denote this quantum graph by the tuple $\G = (S, \M, B(\h))$.
\end{defn}

Motivated by confusability graphs in information theory, quantum graphs are generally assumed to be reflexive ($I \in S$) and hence, $S$ is an operator system in $B(\h)$. But for the purposes of graph coloring, we will only consider irreflexive quantum graphs, that is quantum analogues of graphs without loops.

\begin{defn}
A quantum graph $(S, \M, B(\h))$ is said to be \textit{irreflexive} if $S \subseteq (\M')^{\perp}$.
\end{defn}
 
In particular, an irreflexive quantum graph on $M_n$ (with the standard representation $M_n = B(\C^n)$) is simply a self-adjoint traceless operator subspace in $M_n$. This is sometimes used as the definition of non-commutative graphs in the literature \cite{stahlke}.

\noindent It can be shown that the operator space $S$ associated to a quantum graph $(S, \M, B(\h))$
is essentially independent of the representation of $\M$ \cite{weaver1}.
The intuition is that $S$ contains operators that represent edges in the graph, as illustrated by the following example.

\begin{eg} \label{classical S}
Let $G$ be a classical graph on $n$ vertices. One can identify the vertex set of $G$ with the algebra of diagonal matrices $D_n \subseteq M_n$, by identifying each vertex $i$ with the diagonal matrix $e_{ii} \in D_n$. Then, $S_G = span\{ e_{ij}: (i,j) \mbox{ is an edge in } G\} \subseteq M_n$ is a quantum graph over $D_n$. 
\end{eg}

\begin{rem}
Any quantum graph over $D_n$ is necessarily of the form $S_G$ for some classical graph $G$ \cite{weaver-qg}. Also, two reflexive classical graphs $G_1, G_2$ are isomorphic if and only if their corresponding operator systems $S_{G_1}, S_{G_2}$ are unitally completely order isomorphic \cite{paulsen-ortiz}.
\end{rem}

A ``purely quantum" example is the following one:

\begin{eg}
Let $\M = M_2$ and $S = \left \{ 
\begin{bmatrix}
a & b \\
c & a 
\end{bmatrix} :
a, b, c \in \C
\right \}$. Then $(S, M_2, B(\C^2))$ is a quantum graph on $M_2$ that doesn't arise from any classical graph.
\end{eg}

\subsection{Quantum graphs with a quantum adjacency matrix}
 
In this paper, we take advantage of an alternate (but equivalent) definition of a quantum graph, which involves quantizing the vertex set and the adjacency matrix. This formalism was first introduced in \cite{oxford} using the language of special symmetric dagger Frobenius algebras, and was later generalized to the non-tracial case in \cite{kari, matsuda}.  In this perspective, the non-commutative analogue of a vertex set is played by a C*-algebra, which also carries the structure of a Hilbert space. It is defined as follows:

\begin{defn}[Quantum set]
A quantum set is a pair $(\M, \psi)$, where $\M$ is a finite dimensional C*-algebra and $\psi: \M \to \C$ is a faithful state.
\end{defn}

Using $\psi$, one can view $\M$ as a Hilbert space $L^2(\M) = L^2(\M, \psi)$ obtained from the GNS representation of $\M$ with respect to $\psi$. That is, $L^2(\M)$ is the vector space $\M$ equipped with the inner product $\langle x, y \rangle = \psi(y^*x)$. 

Let $m: \M \otimes \M \to \M$ denote the multiplication map and $m^*$ denote the adjoint of $m$ when viewed as a linear operator from $L^2(\M) \otimes L^2(\M) \to L^2(\M)$. Further, we denote the unit of $\M$ by $\unit$ and let $\eta: \C \to \M$ be the unit map $\lambda \mapsto \lambda \unit$. The adjoint of $\eta$ (as an operator on Hilbert spaces) is denoted by $\eta^*$ and is equal to $\psi$.
While there are many choices for a faithful state $\psi$ on $\M$, we will restrict our attention to $\delta$-forms, as done in \cite{kari}. 

\begin{defn}
For $\delta > 0$, a state $\psi: \M \to \C$ is called a \textit{$\delta$-form} if $mm^* = \delta^2 I$. 
\end{defn}

\begin{eg} 
Let $X$ be a finite set and $\M = C(X)$ be the algebra of continuous complex valued functions on $X$ . Then the uniform measure $\psi (f) = \frac{1}{|X|}   \sum_{x \in X} f(x)$ is a $\delta$-form on $C(X)$ with $\delta^2 = |X|$. In this case, $m^*$ is given by $e_i \mapsto |X| (e_i \otimes e_i)$, where $e_i$ is the characteristic function on the set $\{i \} \subseteq X$.
\end{eg}

\begin{eg}
Let $\M$ be $M_n$ equipped with the canonical normalized trace $\psi = \frac{1}{n} \Tr$. 
Then $m^*(e_{ij}) = n \sum_{k=1}^n e_{ik} \otimes e_{kj}$, and $\psi$ is an $n$-form on $M_n$.
\end{eg}

The $\delta$-forms in the above examples are tracial, that is $\psi(xy) = \psi(yx)$ for all $x,y \in \M$. 
A tracial $\delta$-form on a finite dimensional C*-algebra is unique and has a nice form, which will be used in later sections.
We recall this now:

\begin{prop} \label{plancheral}
Let $\M$ be a finite dimensional C*-algebra, decomposed as $\M \cong \bigoplus_{i=1}^N M_{n_i}$, where $N, n_1, n_2, \ldots, n_N$ are some positive integers.
Then, there exists a \textit{unique} tracial $\delta$-form on $\M$ given by 
\begin{equation} \label{plancheral trace}
\psi = \dfrac{1}{dim(\M)} \bigoplus_{i=1}^N n_i \Tr( \cdot ) 
\end{equation}
In this case, $\delta^2 = dim(\M)$ and the state $\psi$ is called the \textit{Plancheral trace}.
Moreover, $\psi = \frac{1}{\dim(\M)} \Tr\vert_{\M}$ , where $\Tr: B(L^2(\M)) \to \C$ is the canonical trace.
\end{prop}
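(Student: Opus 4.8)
The plan is to reduce everything to the Wedderburn structure theorem together with a single explicit computation of $mm^*$ in a well-chosen orthonormal basis. Since $\M$ is a finite dimensional C*-algebra we may work with the given decomposition $\M = \bigoplus_{i=1}^N M_{n_i}$, writing a general element as $x = \bigoplus_i x^{(i)}$ and denoting by $e^{(i)}_{jk}$ the matrix units of the $i$-th block. Any tracial linear functional on $\M$ restricts on each block to a scalar multiple of the usual matrix trace, so it must take the form $\psi(x) = \sum_{i=1}^N c_i \Tr(x^{(i)})$; demanding that $\psi$ be a \emph{faithful state} then forces $c_i > 0$ for every $i$ and the normalization $\psi(\unit) = \sum_i c_i n_i = 1$. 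Thus a tracial $\delta$-form, should it exist, is completely determined by the weights $(c_i)$, and the entire proposition reduces to deciding for which weights the $\delta$-form condition $mm^* = \delta^2 I$ holds.

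To settle this I would first record that, with respect to $\langle x, y\rangle = \psi(y^*x)$, the matrix units are mutually orthogonal with $\|e^{(i)}_{jk}\|^2 = c_i$, so that $\{\,c_i^{-1/2} e^{(i)}_{jk}\,\}$ is an orthonormal basis of $L^2(\M)$. Computing the adjoint through $\langle m^*(x), u\otimes v\rangle = \langle x, uv\rangle$, then applying $m$ and summing over this basis, yields the clean formula
\[
mm^*(x) = \bigoplus_{i=1}^N \frac{n_i}{c_i}\, x^{(i)} .
\]
Hence $mm^* = \delta^2 I$ is equivalent to $n_i/c_i = \delta^2$ for every $i$, i.e.\ $c_i = n_i/\delta^2$. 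Substituting back into the normalization $\sum_i c_i n_i = 1$ gives $\delta^2 = \sum_i n_i^2 = \dim(\M)$ and therefore $c_i = n_i/\dim(\M)$. This one calculation simultaneously establishes existence, uniqueness (the weights are forced at each stage), the explicit form \eqref{plancheral trace}, and the value $\delta^2 = \dim(\M)$.

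For the final identification $\psi = \frac{1}{\dim(\M)} \Tr\vert_{\M}$, I would read $\Tr\vert_{\M}$ as the canonical trace on $B(L^2(\M))$ evaluated on the left-multiplication operator $L_x$, and compute $\Tr(L_x)$ in the same orthonormal basis. Using $\langle x\,e^{(i)}_{jk}, e^{(i)}_{jk}\rangle = c_i\, x^{(i)}_{jj}$ and summing over $j,k$, the free index $k$ (ranging over $n_i$ values) contributes a factor $n_i$, so that $\Tr(L_x) = \sum_i n_i \Tr(x^{(i)})$; dividing by $\dim(\M)$ recovers precisely $\psi$.

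The only genuinely technical point is the evaluation of $mm^*$: one must track carefully which block and matrix-unit indices survive in the products $e^{(i)}_{jk} e^{(i')}_{j'k'}$, and it is exactly the internal summation over the contracted middle index that supplies the factor $n_i$ appearing both in the formula for $\psi$ and in $\delta^2 = \dim(\M)$. Everything else is routine bookkeeping with the Wedderburn decomposition and with the positivity and normalization constraints that characterize a faithful tracial state.
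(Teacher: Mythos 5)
Your proof is correct, and at its core it is the standard argument: the paper itself states Proposition \ref{plancheral} without proof, recalling it as a known fact, and the route it implicitly relies on is the characterization from the literature that a faithful state $\psi = \bigoplus_{i=1}^N \Tr(Q_i\,\cdot\,)$ on $\bigoplus_{i=1}^N M_{n_i}$ is a $\delta$-form if and only if each density matrix $Q_i$ is invertible with $\Tr(Q_i^{-1}) = \delta^2$. Traciality forces $Q_i = c_i I_{n_i}$, so that criterion reads $n_i/c_i = \delta^2$ --- exactly the relation you extract from your computation $mm^*(x) = \bigoplus_i (n_i/c_i)\,x^{(i)}$ --- and combining with the normalization $\sum_i c_i n_i = 1$ forces $\delta^2 = \sum_i n_i^2 = \dim(\M)$ in both treatments. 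What your write-up buys is self-containedness: you verify the $mm^*$ formula from scratch in the orthonormal basis $\{c_i^{-1/2} e^{(i)}_{jk}\}$ rather than quoting the characterization, and your individual steps all check out, namely $\|e^{(i)}_{jk}\|^2 = c_i$, the contraction over the middle index supplying the factor $n_i$, and $\Tr(L_x) = \sum_i n_i \Tr(x^{(i)})$ for the identification $\psi = \frac{1}{\dim(\M)}\Tr\vert_{\M}$ (consistent with the paper's examples, where $\delta^2 = |X|$ for $C(X)$ and $\delta^2 = n^2$ for $(M_n, \frac{1}{n}\Tr)$). One point worth making explicit if you polish this: uniqueness here is over all $\delta > 0$ simultaneously, and your argument genuinely delivers this, since $\delta^2$ is itself pinned down by the normalization rather than fixed in advance.
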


A quantum set endowed with an additional structure of a quantum adjacency matrix yields a quantum graph. 

\begin{defn}[\cite{kari}] \label{qg2}
Let $\M$ be a finite dimensional C*-algebra equipped with a $\delta$-form $\psi$. A self-adjoint linear map $A: L^2(\M) \to L^2(\M)$ is called a \textit{quantum adjacency matrix} if it satisfies the following conditions:
\begin{enumerate}
\item $m(A \otimes A)m^* = \delta^2 A$,
\item $(I \otimes \eta^*m )(I \otimes A \otimes I) ( m^* \eta \otimes I) = A $.
\end{enumerate}
The tuple $\G = (\M, \psi, A)$ is called an (undirected) quantum graph. 

The quantum graph $(\M, \psi, A)$ is said to be \textit{reflexive} if it further satisfies the condition $m(A \otimes I)m^* = \delta^{2} I$ or is said to be \textit{irreflexive} if it satisfies the condition $m(A \otimes I)m^* = 0$.

\end{defn}

The motivation for the above definition comes from the commutative setting where $\M = C(X)$ and $\psi$ is the uniform measure on $X$. In this case, the quantum adjacency matrix $A: L^2(\M) \to L^2(\M)$ can be identified with a matrix in $M_{|X|}(\C)$, and the operation $\delta^{-2} m(P \otimes Q)m^*$ is simply the schur product of the matrices $P$ and $Q$, given by entrywise multiplication. So, the first condition in definition \ref{qg2} says that $A$ must be an idempotent with respect to Schur multiplication, which is equivalent to saying that $A$ has entries in $\{0, 1\}$. The second condition says $ A = A^T$. 
If we drop the second condition in definition \ref{qg2}, it is called a \textit{directed} quantum graph \cite{kari2}. 

\begin{rem} \label{* preserve}
The self-adjointness of $A$ along with condition (2) in definition \ref{qg2} implies that $A$ is also *-preserving  \cite{matsuda}, that is 
$Ax^* = (Ax)^*$ for all $x \in \M$.
\end{rem}

Every quantum set can be easily equipped with an adjacency operator to obtain a quantum graph. An example is the 
complete quantum graph.

\begin{defn}
Let $(\M, \psi)$ be a quantum set.  A reflexive \textit{complete quantum graph} on $\M$ is defined by $A =  \delta^2 \psi (\cdot) \unit$. In the classical case, this gives the all 1s matrix and corresponds to the reflexive complete graph on $\dim(\M)$ vertices.

An \textit{irreflexive} complete quantum graph on $(\M, \psi)$ is defined by $A =  \delta^2 \psi (\cdot) \unit - I$.

\end{defn}

There are several non-trivial examples of quantum graphs. In particular, \cite{matsuda} gives a concrete classification of all undirected reflexive quantum graphs on $M_2$, and \cite{gromada} gives an example of a quantum graph, which is not quantum isomorphic to any classical graph.

\subsection{Translation between different perspectives of quantum graphs} \label{translation}
The two definitions of quantum graphs given in \ref{qg1} and \ref{qg2} can be shown to be equivalent \cite{oxford}, using a bijective correspondence between linear operators on $L^2(\M)$ and elements in $\M \otimes \M^{op}$.
A thorough algebraic proof for the correspondence between different definitions of quantum graphs is given in \cite{larissa}.
We summarize this connection below:

\begin{enumerate}
\item Given a quantum graph $(\M, \psi, A)$, define $P: B(L^2(\M)) \to B(L^2(\M))$ as 
\begin{equation} \label{a to p}
P(X) = \delta^{-2} m(A \otimes X)m^* .
\end{equation} 
Then, range$(P)$ is a self-adjoint operator subspace in $B(L^2(\M))$ that is a bimodule over $\M'$.

\item Given a quantum graph $( S, \; (\M, \psi), \; B(L^2(\M)) \;)$, 
let $P: B(L^2(\M)) \to B(L^2(\M))$ denote a self-adjoint $\M'-\M'$ bimodule projection with $range(P) = S$. 

That is, $P(axb) = aP(x)b$, for all $x \in B(L^2(\M))$, $a,b \in \M'$ and $P^2 = P = P^*$, where the adjoint is taken with respect to the trace inner product on $B(L^2(\M))$.
(Such a $P$ always exists and is unique for the given $S$ \cite{weaver1}.)

Then, $A: L^2(\M) \to L^2(\M)$ defined by
\begin{equation} \label{p to a}
A(x) = \delta^2 (\psi \otimes I) P ( x \otimes 1)
\end{equation}
is a quantum adjacency matrix on $(\M,\psi)$.
\end{enumerate}

\begin{rem} \label{cbm}
In \eqref{p to a} $P$ is interpreted as an element of $\M \otimes \M^{op}$ using the following well-known *-isomorphism in finite dimensions:
\begin{eqnarray*} \label{p and P}
\M \otimes \M^{op} & \cong & {}_{\M'}CB_{\M'} (B(L^2(\M))), \mbox{ given by } \\
x \otimes y^{op} & \longleftrightarrow & x(\cdot)y.
\end{eqnarray*}
Here, $\M^{op}$ denotes the opposite algebra of $\M$ and ${}_{\M'}CB_{\M'} (B(L^2(\M)))$
denotes the set of completely bounded maps $P$ on $B(L^2(\M))$ with the property 
$P(axb) = aP(x)b$, for all $x \in B(L^2(\M))$, $a,b \in \M'$. 
An infinite dimensional version of this result can be found in \cite{effros-ruan}.
\end{rem}

\begin{rem}
We also note that the expressions \eqref{a to p} and \eqref{p to a} are inverses of each other.
\end{rem}

In general, the above correspondence between $S$ and linear operators $A$ is not one-one since there are several different bimodule idempotents $P$ with the same range $S$. 
However, there is a \textit{unique} self-adjoint quantum adjacency matrix $A$ for a given $S$, which corresponds to the unique orthogonal bimodule projection onto $S$. In this case, $A$ is also completely positive, which was used as an alternate definition of quantum adjacency matrix in \cite{randomqg}.

\subsection{Chromatic number of quantum graphs}

In this section, we review a notion of quantum graph coloring that was developed in \cite{bgh} using a two-player quantum-to-classical nonlocal game, generalizing the coloring game for classical graphs \cite{cameron}.
The inputs for the quantum graph coloring game are elements from a suitably chosen basis for the graph operator space (known as quantum edge basis) and the outputs are classical color assignments. The inputs are quantum in the sense that they are tensor product states, where one player receives the left leg of the tensor and the other player receives the right leg. The players win the game if their responses jointly satisfy a synchronicity condition and respect the adjacency structure of the quantum graph. We refer the reader to \cite{bgh} for more details on the game, and for the results presented in this section.

Using the winning strategies for the quantum graph coloring game, the chromatic number of a quantum graph can be defined in different mathematical models: $loc, q, qa, qc, C^*, hered, alg$.  In this paper, we will restrict our discussion to the classical $(loc)$, and quantum $(q)$ chromatic numbers. 
We begin with recalling an algebraic definition of quantum graph coloring that arises from the non-local game in \cite{bgh}.

\begin{defn}[ \cite{bgh}] \label{qcoloring}
Let $\G = (S, \M, B(\h))$ be an irreflexive quantum graph. We say that there is a $c$-coloring of $\G$ if there exists a finite von-Neumann algebra $\N$ with a faithful normal trace and projections $\{P_a\}_{a=1}^c \subseteq \M \otimes \N$ such that 
\begin{enumerate}
\item $P_a^2 = P_a = P_a^*$, for $1 \le a \le c$,
\item $\sum_{i=1}^c P_a = I_{\M \otimes \N}$,
\end{enumerate}
satisfying the following condition:
\begin{equation} \label{annihilate S}
P_a (X \otimes I_{\N}) P_a = 0, \; \forall X \in S \mbox{ and } 1 \le a \le c.
\end{equation}

If $\dim(\N) = 1$, we call it a \textit{classical} $(loc)$ coloring  of $\G$ and if $\dim(\N) < \infty$, we call it a \textit{quantum} $(q)$ coloring  of $\G$.
More generally, when $\N$ is a finite von-Neumann algebra (possibly infinite dimensional), 
it is called a \textit{quantum commuting} $(qc)$ coloring  of $\G$.
\end{defn}

The projections $\{P_a\}_{a=1}^c$ are obtained from the winning strategies of the non-local quantum graph coloring game. In particular, when $\M = D_n$, we recover the usual classical and quantum coloring of classical graphs on $n$ vertices.

\begin{defn}[ \cite{bgh}] \label{qc-qg}
Let $\G = (S, \M, B(\h))$ be an irreflexive quantum graph. The \textit{quantum chromatic number} of $\G$ is defined to be the least $c$ such that there exists a $c$-coloring of $\G$ in the sense of definition \ref{qcoloring}, with $\dim(\N) < \infty$. We denote this quantum chromatic number by $\chi_q(\G)$.
Moreover, when $\dim(\N) = 1$, it is called the \textit{classical} chromatic number $\chi(\G) = \chi_{loc}(\G)$ and when $dim(\N) = \infty$, it is called the \textit{quantum commuting} chromatic number $\chi_{qc}(\G)$.
\end{defn}

It was shown in \cite{bgh} that every quantum graph $\G = (S, \M, M_n)$ has a finite quantum coloring and $\chi_q(\G) \le \dim(\M)$. Further, for all quantum graphs $\G$, we have
\begin{equation}
\chi_{qc} (\G) \le \chi_{q} (\G) \le \chi (\G).
\end{equation}
Also, if $(S, \M, M_n)$ and $(T, \M, M_n)$ are two quantum graphs such that $S \subseteq T$, then  $\chi_t(S, \M, M_n) \le \chi_t(T, \M, M_n)$, where $t \in \{loc, q, qc\}$.

\begin{eg} 
Let $G$ be a classical graph on $n$ vertices and $\G = (S_G, D_n, M_n)$ be the quantum graph associated with $G$, as in example \ref{classical S}. Then, 
\begin{equation}
\chi(\G) = \chi(G) \mbox{ and } \chi_q(\G) = \chi_q(G).
\end{equation}
\end{eg}


\begin{eg}
For complete quantum  graphs, the quantum chromatic number is the full dimension of the quantum vertex set. 
That is, $\chi_q(M_n, \M, M_n) = dim(\M)$. 

\end{eg}

\begin{rem}
Indeed, $\chi(M_n, \M, M_n) < \infty$ if and only if $\M$ is abelian.
In particular, if $\M$ is non-abelian, then $\chi(M_n, \M, M_n) \ne \chi_q(M_n, \M, M_n)$.
\end{rem} 

It is also useful to note that definition \ref{qc-qg} is a special case of Stahlke's entanglement-assisted chromatic number \cite{stahlke}. Also, when $\N = \C$, it is equivalent to Kim \& Mehta's strong chromatic numbers of non-commutative graphs \cite{mehta}.

\section{Use of quantum adjacency matrix in coloring} \label{pinching and twirling}

While definition \ref{qg1} was used in \cite{bgh} for developing chromatic number of quantum graphs, definition \ref{qg2}  offers the advantage of associating a spectrum with the quantum graph, which is useful for estimating these chromatic numbers.  We introduce this now:

\begin{defn}
Let $\M$ be a finite dimensional C*-algebra equipped with its tracial $\delta$-form $\psi$, 
and let $\G = (S, \M, B(L^2(\M,\psi)))$ be a (undirected) quantum graph on $(\M, \psi)$. The \textit{spectrum of $\G$} is defined to be the spectrum of the quantum adjacency operator $A$, defined by
\begin{equation} \label{fix A}
A = \delta^{-2} (\psi \otimes I) P_S (I \otimes \unit),
\end{equation}
where $P_S$ is the orthogonal bimodule projection onto $S$.
\end{defn}

Note that $A$ is \textit{self-adjoint} 
and so, the spectrum of an undirected quantum graph is real.

\begin{conv} \label{assumption}
For the remainder of this paper, $\M$ denotes a finite dimensional C*-algebra equipped with its \textit{tracial} $\delta$-form $\psi$, as given in \ref{plancheral}. 
We assume that our quantum graph $(S, \M, B(L^2(\M,\psi)))$ is irreflexive. 
Further, $A$ always refers to the unique self-adjoint quantum adjacency matrix 
associated with $S$. We denote this quantum graph by $\G = (\M, \psi, A, S)$.
 \end{conv}

We now show the connection between quantum adjacency matrix and quantum graph coloring by generalizing some algebraic results in \cite{EW1} to the quantum graph setting. The following lemma proves that ``pinching" operation annihilates the quantum adjacency matrix and leaves the commutant of the quantum vertex set invariant.

\begin{lem} \label{pinching1}
Let $\G = (\M, \psi, A, S)$ be an irreflexive quantum graph. If $\{P_k\}_{k=1}^c \subseteq \M \otimes \N$ is an arbitrary $c$-quantum coloring of $\G$, then 
\begin{equation} \label{annihilate A}
\sum_{k =1}^c P_k (A \otimes I_{\N}) P_k  = 0, 
\end{equation}
\begin{equation} \label{p commute d}
\sum_{k = 1}^c P_k (E \otimes I_{\N}) P_k  = E \otimes I_{\N}, \;\; \forall E \in \M'.
\end{equation}
\end{lem}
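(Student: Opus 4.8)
The plan is to prove the two identities separately. Identity \eqref{p commute d} is essentially a commutation statement that uses no information about the graph structure, whereas \eqref{annihilate A} is where the quantum adjacency matrix genuinely enters; the whole point there will be to recognize that $A$ itself, viewed as an operator in $B(L^2(\M))$, lies in the operator space $S$, after which the defining coloring relation \eqref{annihilate S} finishes the argument.

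For \eqref{p commute d}, I would first observe that $\M$ and $\M'$ commute inside $B(L^2(\M))$ by definition of the commutant, and that $\N$ commutes with the scalars $\C I_{\N}$. Since $P_k \in \M \otimes \N$ and $E \in \M'$, it follows that $E \otimes I_{\N}$ commutes with each $P_k$. Using $P_k^2 = P_k$, this gives $P_k (E \otimes I_{\N}) P_k = (E \otimes I_{\N}) P_k$ for every $k$. Summing over $k$ and invoking the completeness relation $\sum_k P_k = I_{\M \otimes \N}$ from Definition \ref{qcoloring} then yields
\[
\sum_{k=1}^c P_k (E \otimes I_{\N}) P_k = (E \otimes I_{\N}) \sum_{k=1}^c P_k = E \otimes I_{\N},
\]
which is exactly \eqref{p commute d}.

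For \eqref{annihilate A}, the key step, and the main obstacle, is to show that $A \in S$. To see this I would use the correspondence of Section \ref{translation}: by \eqref{a to p} the map $P_S(X) = \delta^{-2} m(A \otimes X) m^*$ has range $S$, and by Convention \ref{assumption} (the self-adjoint $A$ corresponds to the orthogonal bimodule projection) this $P_S$ is precisely the orthogonal projection of $B(L^2(\M))$ onto $S$. Evaluating at $X = A$ and using condition (1) of Definition \ref{qg2}, namely $m(A \otimes A) m^* = \delta^2 A$, gives
\[
P_S(A) = \delta^{-2} m(A \otimes A) m^* = \delta^{-2} \cdot \delta^2 A = A,
\]
so $A = P_S(A) \in \operatorname{range}(P_S) = S$.

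With $A \in S$ established, the coloring condition \eqref{annihilate S} applies directly with the choice $X = A$: for each $k$ one has $P_k (A \otimes I_{\N}) P_k = 0$, and summing over $k$ gives \eqref{annihilate A}. I expect the only genuinely delicate point to be the identification in the previous paragraph, namely reading $P_S$ simultaneously as the orthogonal projection onto $S$ and through the Schur-type formula $\delta^{-2} m(A \otimes \cdot) m^*$; once that is in place, everything reduces to the idempotence relation of the quantum adjacency matrix and the elementary bookkeeping recorded in Definition \ref{qcoloring}.
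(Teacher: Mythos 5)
Your proposal is correct and follows essentially the same route as the paper: you establish $A \in S$ by identifying the orthogonal bimodule projection $P_S$ with $\delta^{-2} m(A \otimes (\cdot))m^*$ via the inverse relations \eqref{a to p} and \eqref{p to a}, apply the Schur idempotence $m(A\otimes A)m^* = \delta^2 A$ to get $P_S(A) = A$, and then invoke \eqref{annihilate S}, while \eqref{p commute d} follows exactly as in the paper from $P_k \in \M \otimes \N$ commuting with $E \otimes I_{\N} \in \M' \otimes \N'$ together with $\sum_k P_k = I_{\M \otimes \N}$. No gaps; your termwise observation $P_k(A \otimes I_{\N})P_k = 0$ is even slightly more explicit than the paper's summed statement.
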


\begin{proof}
We first show that $A \in S$. 
Recall that $A$ is given by \eqref{fix A}, using the orthogonal bimodule projection onto $S$.
Using the inverse relations \eqref{a to p} and \eqref{p to a},
it can be shown that $P_S$ must be of the form $\delta^{-2} m( A \otimes (\cdot) )m^*$.
In particular, $P_S(A) = \delta^{-2} m(A \otimes A)m^* = A$ by the Schur idempotent property of $A$. So, $A \in range(P_S) = S$. Now, by \eqref{annihilate S}, we get that $\sum_{k =1}^c P_k (A \otimes I_{\N}) P_k  = 0$.

\noindent Equation \eqref{p commute d} follows from the fact that the projections $P_k \in \M \otimes \N$ commute with $E \otimes I_{\N} \in \M' \otimes \N'$, and $\sum_{k=1}^c P_k = I_{\M \otimes \N}$.
\end{proof}

The next lemma is a corresponding result for the ``twirling" operation.

\begin{lem} \label{pinching2}
Suppose $\G = (\M, \psi, A, S)$ is an irreflexive quantum graph and $\{P_k\}_{k=1}^c \subseteq \M \otimes \N$ is a $c$-quantum coloring of $\G$. Define $U := \sum_{l =1}^c \omega^l P_l$, where $\omega = e^{2 \pi i / c}$ is a $c^{th}$ root of unity. Then,  
\begin{equation}
\sum_{k =1}^c P_k (X \otimes I_{\N}) P_k  = \dfrac{1}{c} \sum_{k =1}^c U^k (X \otimes I_{\N}) (U^*)^k, 
\;\;\; \forall \; X \in B(L^2(\M)). 
\end{equation}
In particular,
\begin{equation} \label{l7}
\sum_{k =1}^c U^k (A \otimes I_{\N}) (U^*)^k  = 0, 
\end{equation}
\begin{equation}
\sum_{k =1}^c U^k (E \otimes I_{\N}) (U^*)^k  = c \; (E \otimes I_{\N}), \;\; \forall E \in \M'.
\end{equation}

\end{lem}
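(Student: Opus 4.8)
The plan is to establish the twirling identity as a purely algebraic consequence of the two facts already available from Lemma \ref{pinching1}, namely the pinching relations \eqref{annihilate A} and \eqref{p commute d}, together with the defining properties of the coloring projections: $P_k^2 = P_k = P_k^*$ and $\sum_{k=1}^c P_k = I_{\M \otimes \N}$. First I would prove the master identity
\[
\sum_{k=1}^c P_k (X \otimes I_{\N}) P_k = \dfrac{1}{c} \sum_{k=1}^c U^k (X \otimes I_{\N}) (U^*)^k
\]
for \emph{arbitrary} $X \in B(L^2(\M))$, since the two special cases \eqref{l7} and the last displayed equation then follow by substituting $X = A$ (using \eqref{annihilate A}) and $X \in \M'$ (using \eqref{p commute d}). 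This reduces everything to a single computation involving $U$.

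The key computation is to expand $U^k (X \otimes I_{\N})(U^*)^k$ using the definition $U = \sum_{l=1}^c \omega^l P_l$ and the \emph{orthogonality} of the projections, $P_l P_{l'} = \delta_{l l'} P_l$, which holds because the $\{P_l\}$ are mutually orthogonal (they are projections summing to the identity). The crucial observation is that $U$ is unitary: since $\{P_l\}$ is a resolution of the identity into orthogonal projections, $U$ is normal with $U^k = \sum_{l=1}^c \omega^{kl} P_l$ and $(U^*)^k = \sum_{l=1}^c \overline{\omega}^{\,kl} P_l = \sum_{l=1}^c \omega^{-kl} P_l$. Therefore
\[
U^k (X \otimes I_{\N}) (U^*)^k = \sum_{l, l'=1}^c \omega^{k(l - l')} P_l (X \otimes I_{\N}) P_{l'}.
\]
Summing over $k$ from $1$ to $c$ and interchanging the order of summation, the inner sum $\sum_{k=1}^c \omega^{k(l - l')}$ is the standard geometric/character sum, which equals $c$ when $l \equiv l' \pmod c$ and $0$ otherwise. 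Since $l, l' \in \{1, \dots, c\}$, the only surviving terms are those with $l = l'$, collapsing the double sum to $c \sum_{l=1}^c P_l (X \otimes I_{\N}) P_l$. Dividing by $c$ yields the master identity.

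I expect the main (though modest) obstacle to be the bookkeeping in the root-of-unity sum: one must verify that $U^k = \sum_l \omega^{kl} P_l$ holds for every power $k$ (a short induction using orthogonality of the $P_l$), and that the character sum $\sum_{k=1}^c \omega^{k(l-l')}$ vanishes precisely when $l \neq l'$ within the range $\{1, \dots, c\}$ rather than for differences that are merely nonzero multiples of $c$ — here the index range guarantees $|l - l'| < c$, so $l \equiv l' \pmod c$ forces $l = l'$. Once the master identity is in hand, the two ``in particular'' statements are immediate: plugging $X = A$ gives $\frac{1}{c}\sum_k U^k(A \otimes I_{\N})(U^*)^k = \sum_k P_k(A \otimes I_{\N}) P_k = 0$ by \eqref{annihilate A}, hence \eqref{l7} after multiplying by $c$; and plugging $X = E \in \M'$ gives $\frac{1}{c}\sum_k U^k(E \otimes I_{\N})(U^*)^k = E \otimes I_{\N}$ by \eqref{p commute d}, hence the factor $c$ after rescaling.
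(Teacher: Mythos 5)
Your proposal is correct and follows essentially the same route as the paper's proof: expand $U^k(X\otimes I_{\N})(U^*)^k$ using $U^k = \sum_l \omega^{lk}P_l$ (via mutual orthogonality of the $P_l$), apply the character sum $\sum_{k=1}^c \omega^{(l-l')k} = c\,\delta_{l,l'}$ to collapse the double sum, and then derive the two special cases from Lemma \ref{pinching1}. Your extra remarks (the induction for $U^k$ and the observation that $|l-l'|<c$ makes the mod-$c$ condition force $l=l'$) are just careful bookkeeping of steps the paper leaves implicit.
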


\begin{proof}
Note that $U^* = \sum_{l=1}^c \omega^{-l} P_l$ since $\{P_l\}_{l =1}^c$ are self-adjoint.
Also, the $k^{th}$ power of $U$ is given by
\[ U^k = \sum_{l=1}^c \omega^{lk} P_l \]
as the projections $\{P_l\}_{l =1}^c$ are mutually orthogonal, that is $P_i P_j = 0$ if $i \ne j$.
Now, for $X \in B(L^2(\M))$, we obtain:

\begin{eqnarray*}
\sum_{k =1}^c U^k (X \otimes I_{\N}) (U^*)^k & = & \sum_{k=1}^c \sum_{l,l' =1}^c \omega^{(l - l')k} P_l (X \otimes I_{\N}) P_{l'} \\
& = &  \sum_{l,l' =1}^c ( \sum_{k=1}^c \omega^{(l - l')k} ) P_l (X \otimes I_{\N}) P_{l'} \\
& = &  \sum_{l,l' =1}^c ( c \; \delta_{l,l'} ) P_l (X \otimes I_{\N}) P_{l'}, \mbox{ where $\delta_{l,l'}$ denotes the Kr\"{o}necker delta}\\
& = & c \sum_{l =1}^c P_l (X \otimes I_{\N}) P_l
\end{eqnarray*}
Hence, we get the result. The rest follows from lemma \ref{pinching1}.
\end{proof}

Next, we note some obvious properties of $A \otimes I_{\N}$ for future reference.

\begin{prop} \label{A tilda}
Suppose $\G = (\M, \psi, A, S)$ is an irreflexive quantum graph and $\{P_k\}_{k=1}^c \subseteq \M \otimes \N$ is an arbitrary $c$-quantum coloring of $\G$. Assume that $2 \le \dim(\M) < \infty$ and $\N \subseteq B(\h)$ for some Hilbert space $\h$, say $\dim(\h) = d$.

Define $\tilde{A} = A \otimes I_{\N}$. Then 
\begin{enumerate}
\item $\tilde{A}$ is self-adjoint and has real eigenvalues.
\item The spectrum of $\tilde{A}$ has the same elements as the spectrum of $A$, but each with a multiplicity of $d$. In particular, the largest and smallest eigenvalue of $\tilde{A}$ coincide with the largest and smallest eigenvalue of $A$, respectively.
\item  $\tilde{A} = \sum_{a,b=1}^c P_a \tilde{A} P_b$. 
\item $\tilde{A}$ can be expressed as a block partitioned matrix $\begin{bmatrix}
\widehat{A}_{11} & \widehat{A}_{12} & \ldots & \widehat{A}_{1c} \\
\widehat{A}_{21} & \widehat{A}_{22} & \ldots & \widehat{A}_{2c} \\
\vdots & \vdots & \vdots & \vdots \\
\widehat{A}_{c1} & \widehat{A}_{c2} & \ldots & \widehat{A}_{cc}
\end{bmatrix}$, such that $\widehat{A}_{ii} = 0$ for all $i \in [c]$.
In particular, $\Tr(A) = \dfrac{1}{d} \Tr(\tilde{A}) = 0$.
\end{enumerate}
\end{prop}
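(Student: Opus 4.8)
The plan is to treat the four claims in sequence; parts (1)--(3) are general facts about tensor products and resolutions of the identity, while the only genuine input is the coloring condition \eqref{annihilate S} applied to $A$, which drives part (4). For (1) I would simply note that $A$ is self-adjoint by Convention \ref{assumption} and $I_\N$ is self-adjoint, so $\tilde{A}^* = A^* \otimes I_\N^* = A \otimes I_\N = \tilde{A}$; being self-adjoint on a finite-dimensional space, $\tilde{A}$ has real eigenvalues.

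For (2), I would use that an eigenpair $(\lambda, v)$ of $A$ yields $\tilde{A}(v \otimes w) = \lambda\,(v \otimes w)$ for every $w \in \h$, and conversely that the $\lambda$-eigenspace of $\tilde{A}$ is exactly $\ker(A - \lambda I) \otimes \h$. Hence $\sigma(\tilde{A}) = \sigma(A)$ as sets, with each eigenvalue occurring with multiplicity scaled by $d = \dim(\h)$; in particular $\lambda_{\max}$ and $\lambda_{\min}$ are unchanged. For (3), the defining properties of a quantum coloring give $\sum_{a=1}^c P_a = I_{\M \otimes \N}$, so sandwiching the identity on both sides yields $\tilde{A} = \big(\sum_a P_a\big)\tilde{A}\big(\sum_b P_b\big) = \sum_{a,b=1}^c P_a \tilde{A} P_b$.

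This sets up (4): since the $\{P_a\}$ are mutually orthogonal projections summing to the identity, they decompose $L^2(\M) \otimes \h$ as an orthogonal direct sum of the ranges $\mathrm{range}(P_a)$, and with respect to this decomposition $\tilde{A}$ is exactly the block matrix with entries $\widehat{A}_{ab} = P_a \tilde{A} P_b$, consistent with (3). The crux of (4) is then that the diagonal blocks vanish. In the proof of Lemma \ref{pinching1} it was already established that the self-adjoint quantum adjacency matrix satisfies $A \in S$, so I can invoke the per-color annihilation condition \eqref{annihilate S} — not merely its summed form \eqref{annihilate A} — to conclude $\widehat{A}_{aa} = P_a(A \otimes I_\N)P_a = 0$ for each individual $a \in [c]$.

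Taking traces of the block decomposition then gives $\Tr(\tilde{A}) = \sum_a \Tr(\widehat{A}_{aa}) = 0$, and since $\Tr(\tilde{A}) = \Tr(A)\,\Tr(I_\N) = d\,\Tr(A)$, we recover $\Tr(A) = \tfrac{1}{d}\Tr(\tilde{A}) = 0$. The proposition is essentially routine; the one point deserving attention is exactly this use of the individual condition \eqref{annihilate S}, which is what lets each diagonal block die rather than only their sum, and which in turn relies on the earlier identification $A \in S$. Everything else is bookkeeping with tensor products and the resolution of the identity.
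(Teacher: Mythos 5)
Your proposal is correct and follows essentially the same route as the paper: parts (1)--(3) by the same tensor-product and resolution-of-identity observations, and part (4) by block-decomposing along the mutually orthogonal ranges of the $P_a$. The only cosmetic difference is that you invoke the per-color condition \eqref{annihilate S} (via $A \in S$) directly for each diagonal block, while the paper cites the summed identity \eqref{annihilate A} from Lemma \ref{pinching1} --- equivalent here, since multiplying that sum by $P_i$ on both sides recovers $P_i \tilde{A} P_i = 0$ individually.
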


\begin{proof}
The first two statements are evident since $A$ is self-adjoint and tensoring with identity only produces more copies of the same eigenvalues. The third statement follows from the fact that $\sum_{k=1}^c P_k = I_{\M \otimes \N}$.

To see the last statement, note that $\tilde{A}$ can be interpreted as a giant matrix over complex numbers as $\M$ and $\N$ are finite dimensional. Choose an orthonormal basis for $L^2(\M) \otimes \h$ such that all the projections $P_k$ are represented as diagonal matrices. Identify $\widehat{A}_{ab}$ with the matrix $P_a \tilde{A} P_b$. Then, we get the desired block partition. From \eqref{annihilate A}, it follows that $\widehat{A}_{ii} = 0$ for $1 \le i \le c$.
\end{proof}

\section{Spectral lower bounds for $\chi_q(\G)$ and $\chi(\G)$} \label{bounds}

In this section, we obtain spectral lower bounds for the quantum chromatic number of quantum graphs, generalizing results from
\cite{EW1}.
Since $\chi_q(\G) \le \chi(\G)$ \cite{bgh}, our estimates are also lower bounds for the classical chromatic number of quantum graphs.

\noindent Our spectral bounds for an undirected quantum graph $\G = (\M, \psi, A, S)$ can be summarized as follows:
\begin{equation} \label{all bounds}
1 + \max \left\{  
\dfrac{\lambda_{\max}}{|\lambda_{\min}|}, 
\dfrac{\dim(S)}{\dim(S) - \dim(\M) \gamma_{\min}}, 
\dfrac{s^\pm}{s^\mp}, 
\dfrac{n^\pm}{n^\mp} ,
\dfrac{\lambda_{\max}}{\lambda_{\max} - \gamma_{\max}+ \theta_{max}} 
\right \}
\le 
\chi_q(\G) \le \chi(\G).
\end{equation}

\noindent Here, 
$\lambda_{\max}, \lambda_{\min}$ denote the maximum and minimum eigenvalues of $A$;
$s^+, s^-$ denote the sum of the squares of the positive and negative eigenvalues of $A$ respectively;
$n^+, n^-$ are the number of positive and negative eigenvalues of $A$ including multiplicities;
$\gamma_{\max}, \gamma_{\min}$ denote the maximum and minimum eigenvalues of the 
signless Laplacian operator (definition \ref{q-defn}); and $\theta_{\max}$ denotes the maximum eigenvalue of the Laplacian operator (definition \ref{q-defn}).

The key ingredient in proving these bounds is lemma \ref{pinching1} and \ref{pinching2}. Using these, the proof of the corresponding classical bounds can essentially be adapted to our setting. 

Throughout our discussion, we follow convention \ref{assumption}. So, $A$ always refers to the unique self-adjoint quantum adjacency matrix associated with $(S, \M, B(L^2(\M,\psi)))$, as in \eqref{fix A}.

\subsection{Hoffman's bound}

One of the well-known spectral bounds in graph theory is the Hoffman's bound \cite{hoffman}.
This is a lower bound on the chromatic number of a graph using the largest and smallest eigenvalues of the adjacency matrix.
The classical bound is as follows:
If $G$ is an irreflexive classical graph whose adjacency matrix $A$ has eigenvalues $\lambda_{\max} = \lambda_1 \ge \lambda_2 \ge \ldots \ge \lambda_n = \lambda_{\min}$, then 
\begin{equation}
 1 +  \dfrac{\lambda_{\max}}{| \lambda_{\min} |}  \le  \chi(G).
 \end{equation}

\noindent We can prove a quantum version of this bound using the following result from linear algebra.

\begin{lem} \label{l2}
Let $A$ be a self-adjoint matrix, block partitioned as 
$\begin{bmatrix}
A_{11} & A_{12} & \ldots & A_{1n} \\
A_{21} & A_{22} & \ldots & A_{2n} \\
\vdots & \vdots & \vdots & \vdots \\
A_{n1} & A_{n2} & \ldots & A_{nn}
\end{bmatrix}$. 
Then,
\[ (n-1) \lambda_{\min}(A) + \lambda_{\max}(A) \le \sum_{i=1}^n \lambda_{\max}(A_{ii}),\]
where $\lambda_{\max}(\cdot)$ and $\lambda_{\min}(\cdot)$ represent the maximum and minimum eigenvalues of that matrix.

\begin{proof}
We start with the case $n=2$.
\noindent
Let $x = 
\begin{bmatrix}
x_1 \\
x_2
\end{bmatrix}$ be a normalized eigenvector ($\|x_1\|^2 + \|x_2\|^2 = 1$) corresponding to $\lambda_{\max}(A)$.
Define 
$y = 
\begin{bmatrix}
\frac{\|x_2\|}{\|x_1\|} x_1 \\
- \frac{\|x_1\|}{\|x_2\|} x_2
\end{bmatrix}$. Then, we have
\[ \lambda_{\max}(A) + \lambda_{\min}(A) \le 
\bra{x}A\ket{x} + \bra{y}A\ket{y} = 
\dfrac{\bra{x_1} A_{11} \ket{x_1}}{\|x_1\|^2} + \dfrac{\bra{x_2} A_{22} \ket{x_2}}{\|x_2\|^2} \le
\lambda_{\max}(A_{11}) + \lambda_{\max}(A_{22}).\]
The general case follows by induction on $n$.
\end{proof}

\end{lem}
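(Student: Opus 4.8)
The plan is to prove the inequality
\[
(n-1)\lambda_{\min}(A) + \lambda_{\max}(A) \le \sum_{i=1}^n \lambda_{\max}(A_{ii})
\]
by first establishing the base case $n=2$ explicitly via a carefully chosen pair of test vectors, and then bootstrapping to general $n$ by induction. The guiding principle is the variational (Rayleigh quotient) characterization of the extreme eigenvalues of a self-adjoint matrix: for any unit vector $u$ we have $\lambda_{\min}(A) \le \bra{u}A\ket{u} \le \lambda_{\max}(A)$, and more generally $\bra{u} A_{ii} \ket{u} \le \lambda_{\max}(A_{ii})$ for a block.

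\textbf{Base case $n=2$.} First I would take a normalized eigenvector $x = \begin{bmatrix} x_1 \\ x_2 \end{bmatrix}$ for $\lambda_{\max}(A)$, so that $\bra{x}A\ket{x} = \lambda_{\max}(A)$. The key trick is to construct a second unit vector $y$ that is orthogonal to $x$ but has the \emph{same} block-diagonal ``weights'' as $x$, so that the cross terms involving $A_{12}, A_{21}$ cancel when we add the two quadratic forms. The natural choice is $y = \begin{bmatrix} \tfrac{\|x_2\|}{\|x_1\|} x_1 \\ -\tfrac{\|x_1\|}{\|x_2\|} x_2 \end{bmatrix}$, which one checks is a unit vector and satisfies $\inner{x}{y}=0$. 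A direct computation then shows that the off-diagonal contributions to $\bra{x}A\ket{x} + \bra{y}A\ket{y}$ cancel, leaving only $\frac{\bra{x_1}A_{11}\ket{x_1}}{\|x_1\|^2} + \frac{\bra{x_2}A_{22}\ket{x_2}}{\|x_2\|^2}$. Since $y$ is a unit vector, $\bra{y}A\ket{y} \ge \lambda_{\min}(A)$, giving the left side, and bounding each diagonal Rayleigh quotient by the respective $\lambda_{\max}(A_{ii})$ gives the right side. (One should handle the degenerate cases $\|x_1\|=0$ or $\|x_2\|=0$ separately, as the formula for $y$ breaks down there; but in those cases the eigenvector lives entirely in one block and the inequality follows even more directly.)

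\textbf{Inductive step.} Assuming the result for $n-1$ blocks, I would regroup the $n \times n$ block matrix into a $2 \times 2$ block structure by merging the last $n-1$ diagonal blocks into a single large block $B$ (so $A$ becomes a $2\times 2$ block matrix with corners $A_{11}$ and $B$). Applying the $n=2$ case gives
\[
\lambda_{\min}(A) + \lambda_{\max}(A) \le \lambda_{\max}(A_{11}) + \lambda_{\max}(B),
\]
and then applying the inductive hypothesis to $B$ (which has $n-1$ diagonal blocks $A_{22}, \ldots, A_{nn}$) together with the bound $\lambda_{\min}(B) \ge \lambda_{\min}(A)$ should telescope into the desired estimate. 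Some care is needed to track how the coefficient $(n-1)$ accumulates across the recursion and to justify $\lambda_{\min}(B) \ge \lambda_{\min}(A)$ (which follows since $B$ is a compression of $A$ to a coordinate subspace, so its smallest eigenvalue is no smaller than that of $A$ by Cauchy interlacing / the min-max principle).

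\textbf{Main obstacle.} The genuinely clever part is the $n=2$ orthogonal-cancellation trick; once the precise form of $y$ is in hand the computation is routine. In the induction, the subtle bookkeeping point is ensuring the extremal eigenvalue bounds chain correctly — specifically that one consistently replaces $\lambda_{\min}$ of the smaller compressed blocks by $\lambda_{\min}(A)$ in the right direction of the inequality — so that exactly $(n-1)$ copies of $\lambda_{\min}(A)$ appear and not more. I expect the interlacing justification and the coefficient accounting, rather than any deep idea, to be the place where a careless argument would go wrong.
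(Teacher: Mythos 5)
Your proposal is correct and matches the paper's argument: the same test vector $y=\begin{bmatrix}\tfrac{\|x_2\|}{\|x_1\|}x_1\\ -\tfrac{\|x_1\|}{\|x_2\|}x_2\end{bmatrix}$ for the $n=2$ cancellation, followed by induction on the number of blocks, which the paper leaves implicit and you carry out correctly (merging the last $n-1$ blocks, using $\lambda_{\min}(B)\ge\lambda_{\min}(A)$ from interlacing with the nonnegative coefficient $n-2$, so exactly $(n-1)$ copies of $\lambda_{\min}(A)$ accumulate). Your handling of the degenerate cases $\|x_1\|=0$ or $\|x_2\|=0$ is a detail the paper omits, and your treatment of it is correct.
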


\noindent The generalization of Hoffman's bound to quantum graphs is as follows:
\begin{thm}
Let $\G = (\M, \psi, A, S)$ be an irreflexive quantum graph and $\lambda_{\max} = \lambda_1 \ge \lambda_2 \ge \ldots \ge \lambda_{\dim(\M)} = \lambda_{\min}$ be all the eigenvalues of $A$. Then
\begin{equation} 
1 +  \dfrac{\lambda_{\max}}{| \lambda_{\min} |} \le \chi_q(\G).
\end{equation}

\begin{proof}
Let $\{P_k\}_{k=1}^c \subseteq \M \otimes \N$ be a $c$-quantum coloring of $\G$ and $\tilde{A} = A \otimes I_{\N}$. Partition $\tilde{A}$ as $[ \widehat{A}_{ab} ]_{a,b=1}^c$, as in proposition \ref{A tilda}.
Applying lemma \ref{l2}, we get 
\begin{equation} \label{eq5}
(c-1) \lambda_{\min}(\tilde{A}) + \lambda_{\max}(\tilde{A}) \le \sum_{i=1}^c \lambda_{\max}(\widehat{A}_{ii}).
\end{equation}
But $\widehat{A}_{ii} = 0$ for all $1 \le i \le c$. 
Hence equation \eqref{eq5} reduces to 
\[ (c-1) \lambda_{\min}(\tilde{A}) + \lambda_{\max}(\tilde{A}) \le 0.\]
Recall that $\lambda_{\min}(\tilde{A})  = \lambda_{\min}(A)$ and $\lambda_{\max}(\tilde{A}) = \lambda_{\max}(A)$.
So, we get $(c-1) \lambda_{\min}(A) + \lambda_{\max}(A) \le 0$.
On rearranging and taking minimum over all $c$, we get 
\[ 1 +  \dfrac{\lambda_{\max}(A)}{| \lambda_{\min} (A) |} \le \chi_q(\G).\]
\end{proof}

\end{thm}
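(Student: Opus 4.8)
The plan is to fix an arbitrary $c$-quantum coloring $\{P_k\}_{k=1}^c \subseteq \M \otimes \N$ of $\G$ and to distill from it a purely spectral inequality relating $c$ to the extreme eigenvalues of $A$. The right object to work with is the inflated adjacency operator $\tilde{A} = A \otimes I_{\N}$. By Proposition \ref{A tilda} this operator is self-adjoint, shares its extreme eigenvalues with $A$ (so that $\lambda_{\max}(\tilde{A}) = \lambda_{\max}$ and $\lambda_{\min}(\tilde{A}) = \lambda_{\min}$), and decomposes into a $c \times c$ array of blocks $[\widehat{A}_{ab}]$ relative to the mutually orthogonal projections $P_a$, with the crucial feature that every diagonal block vanishes, $\widehat{A}_{ii} = 0$. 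This vanishing is exactly where the coloring hypothesis enters, via Lemma \ref{pinching1}.

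Next I would feed this block structure into the linear-algebra estimate of Lemma \ref{l2}, applied to the self-adjoint matrix $\tilde{A}$ partitioned into $c^2$ blocks. That lemma gives $(c-1)\lambda_{\min}(\tilde{A}) + \lambda_{\max}(\tilde{A}) \le \sum_{i=1}^c \lambda_{\max}(\widehat{A}_{ii})$, and since each diagonal block is zero the right-hand side collapses to $0$. Rewriting in terms of the eigenvalues of $A$ yields $(c-1)\lambda_{\min} + \lambda_{\max} \le 0$. Because $\G$ is irreflexive one has $\Tr(A) = 0$ (again Proposition \ref{A tilda}), so $A$ carries both strictly positive and strictly negative eigenvalues; in particular $\lambda_{\min} < 0$, the quotient $\lambda_{\max}/|\lambda_{\min}|$ is well defined, and rearranging produces $c \ge 1 + \lambda_{\max}/|\lambda_{\min}|$. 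As this holds for every valid $c$, minimizing over $c$ delivers the desired lower bound $\chi_q(\G) \ge 1 + \lambda_{\max}/|\lambda_{\min}|$.

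I expect the genuine difficulty to sit in the two preparatory results rather than in this final assembly. Lemma \ref{pinching1} is what converts the abstract coloring identity $P_a (X \otimes I_{\N}) P_a = 0$ for $X \in S$ into the concrete statement that the diagonal blocks of $\tilde{A}$ vanish, and the subtle point there is that $A$ itself must lie in $S$, which one reads off from the Schur-idempotent identity $m(A \otimes A)m^* = \delta^2 A$. Lemma \ref{l2} is the non-commutative substitute for the interlacing/averaging step in Hoffman's classical argument, and its inductive proof on the number of blocks is the part most in need of care. Once both are secured, the remainder is bookkeeping: the eigenvalue multiplicities introduced by the tensor factor $I_{\N}$ are harmless precisely because Proposition \ref{A tilda} certifies that the extreme eigenvalues of $\tilde{A}$ agree with those of $A$.
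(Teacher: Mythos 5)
Your proposal is correct and follows essentially the same route as the paper's proof: inflate $A$ to $\tilde{A} = A \otimes I_{\N}$, use Lemma \ref{pinching1} (via Proposition \ref{A tilda}) to kill the diagonal blocks, apply Lemma \ref{l2}, and rearrange $(c-1)\lambda_{\min} + \lambda_{\max} \le 0$ before minimizing over $c$. Your added remark that $\Tr(A) = 0$ forces $\lambda_{\min} < 0$ (for $A \ne 0$) is a small but welcome justification that the quotient $\lambda_{\max}/|\lambda_{\min}|$ makes sense, which the paper leaves implicit.
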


\subsection{Lower bound using edge number}

In this section,  we prove a spectral lower bound on the quantum chromatic number using a quantum analogue for the number of edges in the graph.

For a classical graph $G$ with $n$ vertices and $m$ edges, it was shown \cite{edge} that
\begin{equation} \label{b3}
1 + \dfrac{2m}{2m - n \gamma_{\min}} \le \chi(G),
\end{equation}
where $\gamma_{\min}$ is the minimum eigenvalue of the signless Laplacian of $G$.
To prove a generalization of this bound to arbitrary quantum graphs $(\M, \psi, A, S)$, we first introduce a quantum analogue for $m, n$ and $\gamma_{\min}$.

Recall that the degree matrix for classical graphs is a diagonal matrix obtained from the action of the adjacency matrix on the all 1s vector.
This can be extended to quantum graphs as follows:

\begin{defn}
Let $\G = (\M,\psi,A, S)$ be a quantum graph and $\unit$ denote the unit in $\M$. Then the 
\textit{quantum degree matrix} of $\G$ is a linear operator $D \in B(L^2(\M))$ given by
\[ D: \M \longrightarrow \M \mbox{ as } x \mapsto x (A \unit), \forall x \in \M.\]
In other words, $D$ can be interpreted as $A \unit \in \M$ viewed as an element of $B(L^2(\M))$ under the \textit{right} regular representation.
\end{defn}

\begin{rem}The definition $D = A \unit$ was also used in \cite{randomqg} and \cite{matsuda}. The only difference in our case is that we view $D$ under the \textit{right} regular representation, instead of the usual left regular representation of $\M$. 
The advantage of using right regular representation is that $D$ then belongs to $\M'$.
\end{rem}

Our next goal is to define a quantum analogue for the ``number of edges" in the graph. To do that, we need the following result:

\begin{prop} \label{value of 2m}
Let $\M$ be a finite dimensional C*-algebra, equipped with its tracial $\delta$-form $\psi$. 
If $(\M,\psi,A, S)$ is a quantum graph with degree matrix $D$, then,
\begin{equation} \Tr(D) = \delta^2 \psi(A \unit) = \dim(S). \end{equation}

\begin{proof}
Let $P_S: B(L^2(\M)) \to B(L^2(\M))$ denote the orthogonal bimodule projection onto $S$. 
We can express $P_S$ as an element 
$\sum_{i=1}^t x_i \otimes y_i^{op} \in \M \otimes \M^{op}$, such that
$P_S (a \otimes b) = \sum_{i=1}^{t}  x_ia \otimes by_i$, for all $a,b \in \M$ using the correspondence mentioned in remark \ref{cbm}.
Now, $A = \delta^2 (\psi \otimes I)P_S(I \otimes \eta)$ implies
\begin{equation}
 A(\unit) = \delta^2 (\psi \otimes I) P_S (\unit \otimes \unit) = \delta^2 (\psi \otimes I) (\s{t} x_i \otimes y_i) 
 = \delta^2 \s{t} \psi(x_i) y_i.
 \end{equation}
Thus, \vspace{-0.5cm}
\begin{eqnarray*}
\psi(A \unit) & = & \psi( \delta^2 \s{t} \psi(x_i) y_i) =  \delta^2 \s{t} \psi(x_i) \psi(y_i) \\
& = & \delta^2 \s{t} \inner{x_i}{\unit}\inner{y_i}{\unit} \\
& = & \delta^2 \s{t} \inner{x_i \otimes y_i}{\unit \otimes \unit} 
=  \delta^2 \inner{ \s{t} x_i \otimes y_i}{\unit \otimes \unit} \\
& = & \delta^2 \inner{P_S}{I}, \mbox{       when viewed as operators on $B(L^2(\M))$} \\
& = & \delta^2 \dfrac{\Tr(P_S)}{\dim(B(L^2(\M)))}   \\
& = &  \dim(\M) \dfrac{\dim(S)}{\dim(\M)^2} = \dfrac{\dim(S)}{\dim(\M)}
\end{eqnarray*}
where we have used the fact that $\psi$ is a tracial state and $\delta^2 = \dim(\M)$.
Also, the trace on $B(L^2(\M))$ restricted to $\M$ (or $\M'$ by symmetry) is just $\dim(\M) \psi$.
So, 
\[ \Tr(D) = \dim(\M) \; \psi(A \unit). \]
Hence, $\Tr(D) = \dim(S)$.
\end{proof}

\end{prop}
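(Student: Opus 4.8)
The plan is to prove the two claimed equalities in turn: first the trace identity $\Tr(D) = \delta^2 \psi(A\unit)$, which is essentially a bookkeeping statement about normalizations, and then the substantive identity $\delta^2 \psi(A\unit) = \dim(S)$, which requires unpacking the quantum adjacency matrix in terms of the orthogonal bimodule projection $P_S$ onto $S$.

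For the first equality I would invoke Proposition \ref{plancheral}, which tells us that the canonical trace on $B(L^2(\M))$ restricted to $\M$ equals $\dim(\M)\,\psi = \delta^2 \psi$; by symmetry the same holds for $\M'$ under the right regular representation. Since $D$ is by definition the element $A\unit \in \M$ acting by right multiplication, it is an operator in $\M' \subseteq B(L^2(\M))$, so its trace is $\Tr(D) = \delta^2 \psi(A\unit)$ with no further computation. This reduces the whole statement to computing $\psi(A\unit)$.

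The heart of the matter is therefore $\delta^2 \psi(A\unit) = \dim(S)$. Here I would use the correspondence of Remark \ref{cbm} to write the orthogonal bimodule projection as an element $P_S = \s{t} x_i \otimes y_i^{op} \in \M \otimes \M^{op}$, so that $P_S(a \otimes b) = \s{t} x_i a \otimes b y_i$. Feeding the unit into the defining formula \eqref{p to a} gives $A(\unit) = \delta^2 (\psi \otimes I)P_S(\unit \otimes \unit) = \delta^2 \s{t} \psi(x_i)\, y_i$, and applying $\psi$ yields $\psi(A\unit) = \delta^2 \s{t} \psi(x_i)\psi(y_i)$. The key manipulation is to recognize each scalar $\psi(z)$ as the GNS inner product $\inner{z}{\unit}$, which lets the sum be reassembled, first as $\s{t}\inner{x_i \otimes y_i}{\unit \otimes \unit}$ in $L^2(\M) \otimes L^2(\M)$, and then as the single pairing $\delta^2 \inner{P_S}{I}$ of $P_S$ against the identity in the (normalized) trace inner product on $B(L^2(\M))$.

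Finally I would evaluate this pairing: since $P_S$ is self-adjoint we have $\inner{P_S}{I} = \Tr(P_S)/\dim(B(L^2(\M))) = \dim(S)/\dim(\M)^2$, using that an orthogonal projection with range $S$ has trace $\dim(S)$. Substituting $\delta^2 = \dim(\M)$ then gives $\psi(A\unit) = \dim(S)/\dim(\M)$, hence $\delta^2 \psi(A\unit) = \dim(S)$ and $\Tr(D) = \dim(S)$. The main obstacle I anticipate is purely one of consistency: keeping the several normalizations aligned—the GNS inner product on $L^2(\M)$, the normalized Hilbert--Schmidt inner product on $B(L^2(\M))$, the factor $\delta^2 = \dim(\M)$, and the identity $\Tr\vert_{\M} = \delta^2 \psi$—while simultaneously juggling the two incarnations of $P_S$ as a completely bounded map and as a tensor in $\M \otimes \M^{op}$. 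The one genuinely nontrivial input is the identification $\Tr(P_S) = \dim(S)$; everything else is careful translation between these pictures.
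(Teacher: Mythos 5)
Your proposal is correct and matches the paper's proof essentially step for step: the same tensor representation $P_S = \s{t} x_i \otimes y_i^{op}$ from Remark \ref{cbm}, the same computation of $A(\unit)$ and reassembly of $\delta^2 \s{t} \psi(x_i)\psi(y_i)$ into the pairing $\delta^2 \inner{P_S}{I}$, the same evaluation via $\Tr(P_S) = \dim(S)$, and the same use of $\Tr\vert_{\M'} = \dim(\M)\,\psi$. The only difference is cosmetic ordering --- you dispatch the bookkeeping identity $\Tr(D) = \delta^2\psi(A\unit)$ first, while the paper records it at the end.
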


We now define quantum analogues of some classical quantities:

\begin{defn} \label{q-defn}
Let $\G = (\M,\psi,A, S)$ be an irreflexive quantum graph with degree matrix $D$.
\begin{enumerate}
\item The \textit{quantum vertex number} for $\G$ is defined to be $\dim(\M)$. 
\item The \textit{quantum edge number} for $\G$ is defined to be
$\dfrac{\Tr(D)}{2} = \dfrac{\dim(S)}{2}.$
\item The \textit{Laplacian} of $\G$ is the linear operator $L = D - A \in B(L^2(\M))$.
\item The \textit{signless Laplacian} of $\G$ is the linear operator $Q = D + A \in B(L^2(\M))$.
\end{enumerate}
\end{defn}

For a classical irreflexive graph $G = (V,E)$, these definitions clearly coincide with the usual values. In particular, 
if $\G = (S_G, D_{|V|}, M_{|V|})$, then the quantum vertex number is $|V|$ and the quantum edge number is $|E|$ since $2 |E| = \sum_{v \in V} deg(v) = \Tr(D)$. 

\begin{rem} The quantum edge number need not be an integer in general. But for most purposes, we will only need $2m = \Tr(D) = \dim(S)$.
\end{rem}

We are now ready to prove a quantum version of the spectral bound in \eqref{b3}.

\begin{thm}
Let $\G = (\M,\psi, A, S)$ be an irreflexive quantum graph . Then 
\begin{equation}
1 + \dfrac{2m}{2m - n \gamma_{\min}} \le \chi_q(\G),
\end{equation}
 where $m$ is the quantum edge number, $n$ is the quantum vertex number and $\gamma_{\min}$ is the minimum eigenvalue of the signless Laplacian of $\G$, in the sense of definition \ref{q-defn}.
More precisely,
\begin{equation}
1 + \frac{\dim(S)}{\dim(S) -\dim(\M) \gamma_{\min}} \le \chi_q(\G).
\end{equation}

\begin{proof}
Let $\{P_k\}_{k=1}^c \subseteq \M \otimes \N$ be a $c$-quantum coloring of $\G$ and let 
$U$ be defined as in lemma \ref{pinching2}.
Then, \eqref{l7} can be rearranged as 
$ U^c (A \otimes I_{\N})(U^*)^c = - \sum_{k=1}^{c-1} U^k (A \otimes I_{\N}) (U^*)^k$.
Using $D - Q = - A$ and $U^c = I_{\M \otimes \N}$, we get 
\begin{eqnarray*}
 A \otimes I_{\N} & = & \sum_{k=1}^{c-1} U^k ((D-Q) \otimes I_{\N}) (U^*)^k \\
& = & \sum_{k=1}^{c-1} U^k (D \otimes I_{\N}) (U^*)^k - \sum_{k=1}^{c-1} U^k (Q \otimes I_{\N}) (U^*)^k \\ \label{eq1}
& = & (D \otimes I_{\N}) \sum_{k=1}^{c-1} U^k (U^*)^k - \sum_{k=1}^{c-1} U^k (Q \otimes I_{\N}) (U^*)^k \\ 
& = & (c-1) (D \otimes I_{\N}) - \sum_{k=1}^{c-1} U^k (Q \otimes I_{\N}) (U^*)^k
\end{eqnarray*}
where we have used the fact that $D \in \M'$ and hence $D \otimes I_{\N}$ commutes with $U \in \M \otimes \N$. 
Let $\N$ be represented in some $B(\h)$ and let $u$ denote a unit vector in $\h$ such that $\inner{u}{u} = 1$.
Further, let $\ket{\xi} = \unit \otimes u$ denote a column vector in $L^2(\M) \otimes \h$ and $\bra{\xi}$ denote its corresponding conjugate row vector. 
Multiplying the left and right most sides of the above equation by $\bra{\xi}$ from the left and by $\ket{\xi}$ from the right, we obtain
\begin{equation} \label{eq2}
\bra{\xi} A \otimes I_{\N} \ket{\xi} = (c-1)  \bra{\xi} D \otimes I_{\N} \ket{\xi}
- \sum_{k=1}^{c-1}  \bra{\xi} U^k (Q \otimes I_{\N}) (U^*)^k  \ket{\xi}.
\end{equation}
Now, 
$\bra{\xi}A \otimes I_{\N} \ket{ \xi} = 
\inner{\unit}{A \unit} \inner{u}{u}
= \psi( (A\unit)^*)= \psi( A\unit) 
= \dfrac{\dim(S)}{\dim(\M)}$, where we use the *-preserving property of $A$ (remark \ref{* preserve}) and proposition \ref{value of 2m}.
Similarly, $\langle \xi | D \otimes I_{\N} | \xi \rangle = \dfrac{\dim(S)}{\dim(\M)}$. 
To estimate the last term, recall that eigenvalues are invariant under unitary conjugation and tensoring with identity only changes their multiplicity. So, 
\begin{eqnarray*}
\gamma_{\min} 
& = &  \min \left \{ \bra{w} Q \ket{w} : w \in L^2(\M), \inner{w}{w} = 1 \right\} \\
& = & \min \left \{ \langle v | Q \otimes I_{\N} | v \rangle : v \in L^2(\M) \otimes \h, \inner{v}{v} = 1 \right \} \\
& = & \min \left \{ \langle v | U^k(Q \otimes I_{\N})(U^*)^k | v \rangle : v \in L^2(\M) \otimes \h, \inner{v}{v} = 1 \right \} \\
& \le & \langle \xi | U^k (Q \otimes I_{\N}) (U^*)^k | \xi \rangle, \;\; \forall k \in [c]. 
\end{eqnarray*}
Hence, \eqref{eq2} leads to 
\begin{equation}
\dfrac{\dim(S)}{\dim(\M)} \le (c-1) \dfrac{\dim(S)}{\dim(\M)} - (c-1) \gamma_{\min},
\end{equation} which upon rearranging yields
$1 + \dfrac{\dim(S)}{\dim(S) - \dim(\M) \gamma_{\min}} \le c$. Taking minimum over all $c$ , we get the desired bound. 

\end{proof}

\end{thm}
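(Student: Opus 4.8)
The plan is to combine the twirling identity of Lemma \ref{pinching2} with a single, carefully chosen test vector that encodes the quantum edge number, adapting the classical argument behind \eqref{b3} to the operator setting. I would first fix a $c$-quantum coloring $\{P_k\}_{k=1}^c \subseteq \M \otimes \N$ and form the unitary $U = \sum_l \omega^l P_l$ of Lemma \ref{pinching2}, so that \eqref{l7} holds and $U^c = I_{\M \otimes \N}$. Isolating the $k=c$ term in \eqref{l7} and substituting $-A = D - Q$ (from $Q = D + A$ in Definition \ref{q-defn}) rewrites $A \otimes I_{\N}$ as a $D$-part plus a $Q$-part summed over $k = 1, \ldots, c-1$. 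The first structural step exploits that $D \in \M'$, which is precisely why the degree matrix is defined through the \emph{right} regular representation: since $D \otimes I_{\N}$ then commutes with $U \in \M \otimes \N$ and $U$ is unitary, each conjugate $U^k (D \otimes I_{\N})(U^*)^k$ collapses to $D \otimes I_{\N}$, so the degree terms telescope to $(c-1)(D \otimes I_{\N})$, yielding
\[
A \otimes I_{\N} = (c-1)(D \otimes I_{\N}) - \sum_{k=1}^{c-1} U^k (Q \otimes I_{\N})(U^*)^k.
\]

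Next I would choose the test vector $\ket{\xi} = \unit \otimes u$, where $u \in \h$ is a unit vector and $\N \subseteq B(\h)$; note $\inner{\xi}{\xi} = \psi(\unit)\inner{u}{u} = 1$. The crux of the computation is evaluating the three quadratic forms at $\ket{\xi}$. Using that $A$ is $*$-preserving (Remark \ref{* preserve}), so $(A\unit)^* = A\unit$, together with Proposition \ref{value of 2m}, I would show $\bra{\xi}(A \otimes I_{\N})\ket{\xi} = \psi(A\unit) = \dim(S)/\dim(\M)$; since $D\unit = \unit(A\unit) = A\unit$ in the right regular representation, the same value emerges for $\bra{\xi}(D \otimes I_{\N})\ket{\xi}$. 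For the $Q$-part, each $U^k (Q \otimes I_{\N})(U^*)^k$ is a unitary conjugate of $Q \otimes I_{\N}$ and hence shares the eigenvalues of $Q$, so the Rayleigh quotient bound gives $\bra{\xi} U^k (Q \otimes I_{\N})(U^*)^k \ket{\xi} \ge \gamma_{\min}$ for every $k$.

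Substituting these evaluations into the displayed identity produces
\[
\frac{\dim(S)}{\dim(\M)} \le (c-1)\left(\frac{\dim(S)}{\dim(\M)} - \gamma_{\min}\right),
\]
and rearranging yields $c \ge 1 + \frac{\dim(S)}{\dim(S) - \dim(\M)\gamma_{\min}}$. Here I would note that for any nonempty graph the bracketed quantity is automatically positive, since otherwise the right-hand side would be nonpositive while the left-hand side $\dim(S)/\dim(\M)$ is strictly positive. Applying the inequality to the minimal admissible $c = \chi_q(\G)$ gives the stated bound, and rewriting $\dim(S) = 2m$, $\dim(\M) = n$ recovers the first formulation. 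I expect the main obstacle to be the exact evaluation of the quadratic form at $\unit \otimes u$: one must invoke the $*$-preserving property to identify $(A\unit)^* = A\unit$ and then apply Proposition \ref{value of 2m}, while simultaneously tracking the membership $D \in \M'$ that makes the degree terms telescope. Everything else is routine spectral linear algebra.
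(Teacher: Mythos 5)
Your proposal is correct and takes essentially the same route as the paper's own proof: the same rearrangement of the twirling identity \eqref{l7} via $-A = D - Q$ and $U^c = I$, the same use of $D \in \M'$ to collapse the degree terms, the same test vector $\ket{\xi} = \unit \otimes u$ evaluated through the $*$-preserving property and Proposition \ref{value of 2m}, and the same Rayleigh-quotient lower bound $\gamma_{\min} \le \bra{\xi} U^k (Q \otimes I_{\N}) (U^*)^k \ket{\xi}$. Your explicit observation that $\dim(S) - \dim(\M)\gamma_{\min} > 0$ (so the final division is legitimate) is a minor point the paper leaves implicit, but it does not change the argument.
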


\subsection{Bound using the sum of square of eigenvalues}
In \cite{ando-lin}, it was proved that for a classical graph $G$,
\begin{equation}
1 + \max \left \{ \dfrac{s^+}{s^-} , \dfrac{s^-}{s^+} \right \} \le \chi(G),
\end{equation}
where $s^+$ is the sum of the squares of the positive eigenvalues of the adjacency matrix and 
$s^-$ is the sum of the squares of its negative eigenvalues.
In this section, we show that the above bound also works in the setting of quantum graphs. 
We first recall the following result from linear algebra, whose proof can be found in \cite{ando-lin}.

 \begin{lem} \label{l1}
 Let $X = [X_{ij}]_{i,j}^r$ and $Y = [Y_{ij}]_{i,j}^r$ be two positive semidefinite matrices conformally partitioned. 
 If $X_{ii} = Y_{ii}$ for $1 \le i \le r$ and $XY =0$, then $\Tr(X^*X) \le (r-1) \Tr(Y^*Y)$.
 \end{lem}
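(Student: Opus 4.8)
The statement is purely linear-algebraic and independent of the quantum graph structure, so the plan is to recast it as a clean inertia inequality and then exploit the block structure. Since $X$ and $Y$ are positive semidefinite they are self-adjoint, so $\Tr(X^*X) = \Tr(X^2)$ and $\Tr(Y^*Y) = \Tr(Y^2)$, and it suffices to prove $\Tr(X^2) \le (r-1)\Tr(Y^2)$. The first observation is that $XY = 0$ together with positivity forces $YX = (XY)^* = 0$, so $X$ and $Y$ have mutually orthogonal ranges. Setting $W := X - Y$, this means $X = W_+$ and $Y = W_-$ are exactly the positive and negative parts in the Jordan decomposition of the self-adjoint matrix $W$, whence $\Tr(X^2) = s^+$ and $\Tr(Y^2) = s^-$ are the sums of squares of the positive and negative eigenvalues of $W$. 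Finally, the hypothesis $X_{ii} = Y_{ii}$ translates precisely into $W_{ii} = 0$ for all $i$, i.e. $W$ has vanishing diagonal blocks. Thus the lemma is equivalent to the inertia bound $s^+ \le (r-1)\, s^-$ for any self-adjoint matrix $W$ with zero diagonal blocks in an $r \times r$ conformal partition. I would also record that this is sharp: for the complete graph $K_r$ the adjacency matrix $W = J - I$ has zero diagonal, $s^+ = (r-1)^2$ and $s^- = r-1$, giving equality — consistent with the tightness at complete (quantum) graphs established later in the paper.

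For the inertia inequality itself, the natural tool is the same root-of-unity twirl used in Lemma \ref{pinching2}. Let $U_j = \bigoplus_{i=1}^{r} \omega^{ij} I_{d_i}$, with $\omega = e^{2\pi i/r}$ and $d_i$ the size of the $i$th block, for $j = 0,1,\dots,r-1$. Conjugation and averaging pinch to the block-diagonal part, $\tfrac1r \sum_{j=0}^{r-1} U_j M U_j^* = D_M$, so applying this to $M = W$ and using $D_W = 0$ gives $\sum_{j=0}^{r-1} U_j W U_j^* = 0$, hence $W = -\sum_{j=1}^{r-1} U_j W U_j^*$. Substituting this into the identity $\Tr(W_+^2) = \Tr(W_+ W)$ (valid because $W_+ W_- = 0$) and expanding $W = W_+ - W_-$ yields $\Tr(W_+^2) = \sum_{j=1}^{r-1}(a_j - b_j)$, where $a_j = \|W_+^{1/2} U_j W_-^{1/2}\|_2^2 \ge 0$ and $b_j = \|W_+^{1/2} U_j W_+^{1/2}\|_2^2 \ge 0$ are manifestly nonnegative Hilbert--Schmidt norms. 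Summing the $a_j$ by the same pinching identity produces the clean estimate $\Tr(W_+^2) \le \sum_{j=1}^{r-1} a_j = r\,\Tr(D_{W_-}^2) \le r\,\Tr(W_-^2)$.

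The main obstacle is sharpening the constant from $r$ to the optimal $r-1$. Merely discarding the nonnegative terms $b_j$ is too lossy: the aggregate sums $\sum_j a_j$ and $\sum_j b_j$ are tied together by the pinching identities and recombine into the tautology $\Tr(W_+^2) = \Tr(W_+^2)$, so no crude bound on these averages can recover the final unit of improvement. Likewise, the alternative direct route — reading off $\sum_{k \ne i}\inner{X_{ik}}{Y_{ik}} = -\|X_{ii}\|_2^2$ from the diagonal blocks of $XY = 0$ and applying Cauchy--Schwarz — only yields $\big(\Tr(D_{W_-}^2)\big)^2 \le \big(\Tr(X^2) - \Tr(D_{W_-}^2)\big)\big(\Tr(Y^2) - \Tr(D_{W_-}^2)\big)$, which constrains but does not by itself bound the ratio $\Tr(X^2)/\Tr(Y^2)$. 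The factor $r-1$, tight at $K_r$, must instead be extracted by a finer per-block Cauchy--Schwarz estimate that couples the positive and negative parts and injects the count of $r-1$ off-diagonal blocks in each row; this is the delicate computation carried out in \cite{ando-lin}, which I would invoke to close the argument.
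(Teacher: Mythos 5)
Your attempt is not in conflict with the paper, for the simple reason that the paper supplies no proof of this lemma at all: it is stated as a known result ``whose proof can be found in \cite{ando-lin}'', so your closing citation lands exactly where the paper does. Relative to that baseline, everything you add is correct and genuinely clarifying. The reformulation checks out: positivity plus $XY=0$ gives $YX=(XY)^*=0$ and orthogonal ranges, so $X=W_+$ and $Y=W_-$ in the Jordan decomposition of $W=X-Y$, the hypothesis $X_{ii}=Y_{ii}$ is precisely $W_{ii}=0$, and the lemma becomes the equivalent statement $s^+\le (r-1)\,s^-$ for a self-adjoint matrix with vanishing diagonal blocks --- which is exactly the shape in which the paper applies it in Theorem \ref{bound3}, where $\tilde{B}=W_+$ and $\tilde{C}=W_-$ for $W=\tilde{A}$ (though note the paper reserves the word ``inertia'' for the eigenvalue counts $(n^+,n^0,n^-)$, so your phrase ``inertia bound'' is a slight misnomer). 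Your twirl computation is also sound: since $W_{ii}=0$ forces $D_{W_+}=D_{W_-}$, one indeed gets $\sum_{j=1}^{r-1}a_j=r\,\Tr(D_{W_-}^2)\le r\,\Tr(W_-^2)$, hence $\Tr(W_+^2)\le r\,\Tr(W_-^2)$; your observation that keeping the $b_j$ terms only recombines into a tautology is accurate, as is the equality case $W=J-I$ with $s^+=(r-1)^2$ and $s^-=r-1$. The one thing you do not deliver is the sharp constant $r-1$, and you are candid about delegating that to the finer Cauchy--Schwarz analysis of \cite{ando-lin}; since the paper also imports the lemma wholesale, this is not a gap relative to the paper. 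It is worth stressing, however, that the improvement from $r$ to $r-1$ is not cosmetic for the application: your constant-$r$ version would only yield $\max\left\{ s^+/s^-,\, s^-/s^+\right\}\le \chi_q(\G)$, i.e.\ it loses the ``$1+$'' in Theorem \ref{bound3}, and for the irreflexive complete quantum graph it would give $\dim(\M)-1\le \chi_q(K_{\M})$ rather than the tight $\dim(\M)\le \chi_q(K_{\M})$ used in Section \ref{application}. So if one ever wanted a self-contained account in this paper, it is exactly Ando and Lin's refinement --- and not a patch of the $U_j$-averaging --- that would have to be reproduced.
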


We now adapt the proof of the classical bound in \cite{ando-lin} to the quantum case.

\begin{thm} \label{bound3}
Let $\G = (\M, \psi, A, S)$ be an irreflexive quantum graph and 
$\lambda_1 \ge \lambda_2 \ge \ldots \ge \lambda_{\dim (\M)}$ be all the eigenvalues of $A$.
Let $s^+ = \sum_{\lambda_i > 0} (\lambda_i)^2$ and $s^- = \sum_{\lambda_i < 0} (\lambda_i)^2$. Then,
\begin{equation}
1 + \max \left \{ \dfrac{s^+}{s^-} , \dfrac{s^-}{s^+} \right \} \le \chi_q(\G).
\end{equation}

\begin{proof}
Let $\{P_k\}_{k=1}^c \subseteq \M \otimes \N$ be a $c$-quantum coloring of $\G$.
Further, let $\tilde{A} = A \otimes I_{\N}$ and let $\mu_1 \ge \mu_2 \ge \ldots \ge \mu_t$ be all the eigenvalues of $\tilde{A}$. 
Consider a spectral decomposition of $\tilde{A}$,
\begin{equation}
\tilde{A} = \s{t} \mu_i (v_i v_i^*), \mbox{ where } v_i \in L^2(\M) \otimes \h,
 \end{equation}
and write $\tilde{A} = \tilde{B} - \tilde{C}$, where 
\begin{equation} \label{eq3}
\tilde{B} = \sum_{ \mu_i > 0} \mu_i (v_i v_i^*) \hspace{1cm} \tilde{C} = \sum_{ \mu_i < 0} - \mu_i (v_i v_i^*).
\end{equation}
Suppose $N \subseteq B(\h)$ for some Hilbert space $\h$. Then,
\begin{equation} \label{eq4}
\Tr( {\tilde{B}}^* \tilde{B}) = \sum_{ \mu_i > 0} \mu_i^2 = \dim(\h) \; s^+ \mbox{ and }
\Tr( \tilde{C}^* \tilde{C}) = \sum_{ \mu_i < 0} \mu_i^2 = \dim(\h) \; s^- .
\end{equation}
Partition $\tilde{A}$ as $ [ \widehat{A}_{ab} ]_{a,b=1}^c$
as in proposition \ref{A tilda}.
Similarly, let 
\[ \tilde{B} = [\widehat{B}_{ab}]_{a,b=1}^c = \sum_{a,b =1}^c P_a \tilde{B} P_b
\mbox{ and }
\tilde{C} = [\widehat{C}_{ab}]_{a,b=1}^c = \sum_{a,b =1}^c P_a \tilde{C} P_b.\]
Now, $B$ and $C$ are positive semidefinite matrices that are conformally partitioned. 
Further, $\widehat{B}_{ii} = \widehat{C}_{ii}$  since $0 = P_i \tilde{A} P_i = P_i \tilde{B} P_i - P_i \tilde{C} P_i$ for all $1 \le i \le c$.
Also $\tilde{B}\tilde{C} = \tilde{C}\tilde{B} = 0$.
So, by lemma \ref{l1} and \eqref{eq4}, it follows that 
$\dfrac{s^+}{s^-} \le c - 1$ and $\dfrac{s^-}{s^+} \le c - 1$. Taking minimum over all $c$, we get 
$1 + \max \left \{ \dfrac{s^+}{s^-} , \dfrac{s^-}{s^+} \right \} \le \chi_q(\G)$.
\end{proof}

\end{thm}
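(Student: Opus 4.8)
The plan is to reduce the statement to the purely linear-algebraic Lemma \ref{l1} by splitting the tensored adjacency operator into its positive and negative spectral parts and then exploiting the vanishing diagonal blocks that any quantum coloring produces. First I would fix a $c$-quantum coloring $\{P_k\}_{k=1}^c \subseteq \M \otimes \N$ of $\G$, represent $\N \subseteq B(\h)$, and set $\tilde{A} = A \otimes I_{\N}$. By Proposition \ref{A tilda}, $\tilde{A}$ is self-adjoint, its spectrum consists of the eigenvalues of $A$ with multiplicities scaled by $\dim(\h)$, and in the block partition $[\widehat{A}_{ab}]_{a,b=1}^c$ induced by the $P_a$ every diagonal block vanishes, $\widehat{A}_{ii} = P_i \tilde{A} P_i = 0$. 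This last identity is exactly Lemma \ref{pinching1} applied to the coloring, and it is the structural input that makes the whole argument work.

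Next I would take a spectral decomposition of $\tilde{A}$ and define $\tilde{B}$ and $\tilde{C}$ to be its positive and negative parts, so that $\tilde{A} = \tilde{B} - \tilde{C}$ with $\tilde{B}, \tilde{C} \ge 0$ and $\tilde{B}\tilde{C} = \tilde{C}\tilde{B} = 0$ by orthogonality of eigenspaces. Since the eigenvalues only gain multiplicity under tensoring, I expect $\Tr(\tilde{B}^* \tilde{B}) = \dim(\h)\, s^+$ and $\Tr(\tilde{C}^* \tilde{C}) = \dim(\h)\, s^-$. Partitioning $\tilde{B}$ and $\tilde{C}$ conformally with the same projections, the relation $0 = P_i \tilde{A} P_i = P_i \tilde{B} P_i - P_i \tilde{C} P_i$ forces the diagonal blocks to agree, $\widehat{B}_{ii} = \widehat{C}_{ii}$. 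I would then invoke Lemma \ref{l1} twice, once with $(X,Y) = (\tilde{B}, \tilde{C})$ and once with the roles reversed, to obtain $\dim(\h)\, s^+ \le (c-1)\dim(\h)\, s^-$ and $\dim(\h)\, s^- \le (c-1)\dim(\h)\, s^+$. Cancelling $\dim(\h)$ and rearranging gives $s^+/s^- \le c-1$ and $s^-/s^+ \le c-1$; taking the minimum over all admissible $c$ yields the claimed inequality $1 + \max\{ s^+/s^-,\, s^-/s^+\} \le \chi_q(\G)$.

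The main obstacle is verifying that the hypotheses of Lemma \ref{l1} genuinely hold here. Positivity and the orthogonality $\tilde{B}\tilde{C}=0$ follow immediately from the spectral calculus, but the decisive equality of diagonal blocks $\widehat{B}_{ii} = \widehat{C}_{ii}$ rests entirely on the annihilation identity $P_i(A \otimes I_{\N})P_i = 0$ from Lemma \ref{pinching1}, which is where the combinatorial definition of coloring enters. A secondary point needing care is the bookkeeping of multiplicities under $\otimes\, I_{\N}$: I must confirm that the factor $\dim(\h)$ appears symmetrically in both trace computations so that it cancels cleanly and the bound is independent of the representation of $\N$. Everything else is routine once $\tilde{A}$ has been placed in block form with zero diagonal.
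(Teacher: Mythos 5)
Your proposal is correct and follows essentially the same route as the paper's own proof: spectral splitting $\tilde{A} = \tilde{B} - \tilde{C}$ of the tensored adjacency operator, equality of diagonal blocks $\widehat{B}_{ii} = \widehat{C}_{ii}$ from the annihilation identity of Lemma \ref{pinching1} (noting, as the paper does via \eqref{annihilate S} and $A \in S$, that each $P_i \tilde{A} P_i = 0$ individually), two applications of Lemma \ref{l1}, and cancellation of the $\dim(\h)$ multiplicity factor before minimizing over $c$. There is nothing to correct.
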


\subsection{Inertial lower bound}

In this section, our goal is to generalize the following inertial bound \cite{inertial} to quantum graphs:
\begin{equation}
1 + \max \left \{  \dfrac{n^+}{n^-},  \dfrac{n^-}{n^+}    \right \} \le \chi(G), 
\end{equation}
where $(n^+, n^0, n^-)$ is the inertia of $G$. We begin with defining the inertia of a quantum graph:

\begin{defn}
Let $\G = (\M, \psi, A, S)$ be a quantum graph and 
 $\lambda_1 \ge \lambda_2 \ge \ldots \ge \lambda_{\dim(\M)}$ denote the eigenvalues of $A$.
The \textit{inertia} of $\G$ is the ordered triple $(n^+, n^0, n^-)$, where $n^+$, $n^0$ and $n^-$ are the numbers of positive, zero and negative eigenvalues of $A$ including multiplicities.
\end{defn}

\begin{thm}
Let $\G = (\M, \psi, A, S)$ be an irreflexive quantum graph with inertia $(n^+, n^0, n^-)$.
Then, 
\begin{equation}
1 + \max \left \{  \dfrac{n^+}{n^-},  \dfrac{n^-}{n^+}    \right \} \le \chi_q(\G).
\end{equation}
\end{thm}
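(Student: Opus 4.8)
The plan is to mirror the proof of Theorem \ref{bound3} (the sum-of-squares bound) almost verbatim, replacing the quantity $\Tr(X^*X)$ throughout by the rank. The only genuinely new ingredient I need is a rank analogue of Lemma \ref{l1}: if $X = [X_{ij}]_{i,j=1}^c$ and $Y = [Y_{ij}]_{i,j=1}^c$ are positive semidefinite matrices, conformally partitioned, with $X_{ii} = Y_{ii}$ for all $i$ and $XY = 0$, then $\mathrm{rank}(X) \le (c-1)\,\mathrm{rank}(Y)$. I expect this rank lemma to be the main obstacle; once it is in place, the rest is a transcription.

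To establish the rank lemma I would argue as follows. Since $X, Y \ge 0$ and $XY = 0$, the range of $Y$ lies in $\ker X = (\mathrm{range}\,X)^\perp$, so $\mathrm{range}\,X \perp \mathrm{range}\,Y$. The column-inclusion property of positive semidefinite matrices (the range of an off-diagonal block is contained in the range of the corresponding diagonal block) gives, for the orthogonal projection $\pi_i$ onto the $i$-th block, the identity $\pi_i(\mathrm{range}\,X) = \mathrm{range}(X_{ii})$, and likewise $\pi_i(\mathrm{range}\,Y) = \mathrm{range}(Y_{ii})$; since $X_{ii} = Y_{ii}$ these coincide in a common subspace $V_i$ of dimension $s_i$. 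Consequently both $\mathrm{range}\,X$ and $\mathrm{range}\,Y$ are contained in $\bigoplus_i V_i$, and by orthogonality their dimensions add, so $\mathrm{rank}(X) + \mathrm{rank}(Y) \le \sum_i s_i$. Finally each $V_i = \pi_i(\mathrm{range}\,Y)$ is the image of $\mathrm{range}\,Y$, whence $s_i \le \mathrm{rank}(Y)$ and $\sum_i s_i \le c\,\mathrm{rank}(Y)$; combining gives $\mathrm{rank}(X) \le (c-1)\,\mathrm{rank}(Y)$. (Alternatively this lemma may simply be cited from \cite{ando-lin}.)

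With the lemma secured, I would fix a $c$-quantum coloring $\{P_k\}_{k=1}^c \subseteq \M \otimes \N$ with $\N \subseteq B(\h)$ and $\dim(\h) = d$, set $\tilde{A} = A \otimes I_{\N}$, and decompose $\tilde{A} = \tilde{B} - \tilde{C}$ into its positive and negative spectral parts exactly as in the proof of Theorem \ref{bound3}. By Proposition \ref{A tilda} the eigenvalues of $\tilde{A}$ are those of $A$ with every multiplicity scaled by $d$, so $\tilde{B}, \tilde{C} \ge 0$ with $\mathrm{rank}(\tilde{B}) = d\,n^+$ and $\mathrm{rank}(\tilde{C}) = d\,n^-$, while $\tilde{B}\tilde{C} = \tilde{C}\tilde{B} = 0$. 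Partitioning conformally via the mutually orthogonal projections $P_a$ (choosing a basis in which they are simultaneously diagonal, as in Proposition \ref{A tilda}), the vanishing of the diagonal blocks $\widehat{A}_{ii} = 0$ forces $\widehat{B}_{ii} = \widehat{C}_{ii}$ for every $i$.

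Applying the rank lemma then yields $d\,n^+ \le (c-1)\,d\,n^-$, and cancelling $d$ gives $n^+/n^- \le c-1$; the symmetric application (interchanging the roles of $\tilde{B}$ and $\tilde{C}$) gives $n^-/n^+ \le c-1$. Taking the minimum over all admissible $c$ produces $1 + \max\{n^+/n^-,\, n^-/n^+\} \le \chi_q(\G)$, as claimed. I anticipate no difficulty beyond the rank lemma, since every other step is structurally identical to the sum-of-squares argument.
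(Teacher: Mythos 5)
Your proof is correct, but it takes a genuinely different route from the paper's. The paper proves the inertia bound by \emph{twirling}, not pinching: with $U = \sum_{l} \omega^l P_l$ as in Lemma \ref{pinching2}, it rearranges $\sum_{k=1}^{c} U^k \tilde{A} (U^*)^k = 0$ into $\sum_{k=1}^{c-1} U^k \tilde{B} (U^*)^k - \sum_{k=1}^{c-1} U^k \tilde{C} (U^*)^k = \tilde{C} - \tilde{B}$, compresses both sides by the negative spectral projection $P^-$ (which annihilates $\tilde{B}$ and fixes $\tilde{C}$), and then combines rank monotonicity ($X \ge Y \ge 0$ implies $\mathrm{rank}(X) \ge \mathrm{rank}(Y)$) with subadditivity of rank under sums and products to get $(c-1)n^+ \ge n^-$, and symmetrically $(c-1)n^- \ge n^+$. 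You instead transplant the pinching template of Theorem \ref{bound3} and supply the one genuinely new ingredient yourself: a rank analogue of Lemma \ref{l1}. Your proof of that lemma checks out in every step — orthogonality of ranges from $XY = 0$ with $X, Y \ge 0$; the identity $\pi_i(\mathrm{range}\,X) = \mathrm{range}(X_{ii})$, where $\subseteq$ is the column-inclusion property of positive semidefinite block matrices and $\supseteq$ holds because $X_{ii}$ is the $i$-th block of the $i$-th block column; and the dimension count $\mathrm{rank}(X) + \mathrm{rank}(Y) \le \sum_i s_i \le c\,\mathrm{rank}(Y)$ — and it pleasantly yields both directions of the inequality from a single symmetric statement. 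The application is also handled correctly, including the multiplicity factor $d = \dim(\h)$, which cancels at the end (the paper itself elides this factor, writing $\mathrm{rank}(\tilde{B}) = n^+$). One quibble: your parenthetical fallback of citing the rank lemma from \cite{ando-lin} would not work, since Ando--Lin prove the Frobenius-norm statement of Lemma \ref{l1}, not a rank version; this is immaterial here because you prove the lemma in full. As for what each approach buys: yours is self-contained block linear algebra that unifies the $s^{\pm}$ bound and the inertia bound under one pinching argument, while the paper's reuses the twirling machinery of Lemma \ref{pinching2} that it needs anyway for the edge-number and Laplacian bounds.
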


\begin{proof}
Let $\{P_k\}_{k=1}^c \subseteq \M \otimes \N$ be a $c$-quantum coloring of $\G$.
Let $U$ be defined as in lemma \ref{pinching2} and $\tilde{A}, \tilde{B}$ and $\tilde{C}$ be defined as in the proof of theorem \ref{bound3}. Then, we have
\begin{equation} \label{l5}
\sum_{k=1}^{c-1} U^k \tilde{B} (U^*)^k - \sum_{k=1}^{c-1} U^k \tilde{C} (U^*)^k  = \sum_{k=1}^{c-1} U^k \tilde{A} (U^*)^k = -\tilde{A} = \tilde{C} - \tilde{B},
\end{equation}

Note that $\tilde{B}$ and $\tilde{C}$ are positive definite operators with $rank(\tilde{B}) = n^+$ and $rank(\tilde{C}) = n^-$. 
Further let 
\[ P^+ = \sum_{\mu_i > 0} v_i v_i^* \mbox{ and }  P^- = \sum_{\mu_i < 0} v_i v_i^* \]
denote the orthogonal projectors onto the subspaces spanned by the eigenvectors corresponding to the positive and negative eigenvalues of $\tilde{A}$ respectively. Observe that
$\tilde{B} = P^+ \tilde{A} P^+$ and $\tilde{C} = - P^- \tilde{A} P^-$.
Multiplying \eqref{l5} by $P^-$ on both sides, we obtain:
\begin{equation} \label{l6}
P^- \sum_{k=1}^{c-1} U^k \tilde{B} (U^*)^k P^- - P^- \sum_{k=1}^{c-1} U^k \tilde{C} (U^*)^k P^- = C
\end{equation}

Now we use the fact that if $X,Y$ are two positive definite matrices such that $X-Y$ is positive definite, then $rank(X) \ge rank(Y)$. By applying this to \eqref{l6}, we get 
\[ rank(P^- \sum_{k=1}^{c-1} U^k \tilde{B} (U^*)^k P^-) \ge rank(C). \]
Recall that the rank of a sum is less than or equal to the sum of the ranks of the summands, and that the rank of a product is less than or equal to the minimum of the ranks of the factors. So, we get $(c-1)n^+ \ge n^-$. 
Similarly, it can be shown that 
$(c-1)n^- \ge n^+$. 
Hence, $ \max \left \{  \dfrac{n^+}{n^-},  \dfrac{n^-}{n^+}    \right \} \le c-1$.
Taking minimum over all $c$, we get the desired bound.
\end{proof}

\subsection{Bound using maximum eigenvalue of the Laplacian and signless Laplacian}

Let $L$ and $Q$ denote the Laplacian and signless Laplacian of $\G = (\M, \psi, A, S)$ in the sense of definition \ref{q-defn}. Further, let $\lambda_{\max}, \theta_{\max}$ and $\gamma_{\max}$ denote the largest eigenvalue of $A, L$ and $Q$ respectively. Then 
\begin{equation}
1 + \dfrac{\lambda_{\max}}{\lambda_{\max} - \gamma_{\max}+ \theta_{max}}  \le \chi_q(\G).
\end{equation}

Like the previous cases, this bound can also be shown by adapting the classical proof \cite{kolotilina} and applying lemma \ref{pinching2}.

\section{Illustration} \label{application}
In this section, we illustrate the tightness of these bounds in the case of complete quantum  graphs.
Let $K_{\M}$ denote the irreflexive complete quantum  graph on $(\M, \psi)$. The quantum adjacency matrix in this case is given by
$A = \delta^2 \psi( \cdot) \unit - I$. 
For $x \in \M$, we have
\begin{eqnarray*}
A(x) & = & \delta^2 \psi(x) \unit - I \\
& = & (\dim \M) \inner{x}{\unit} \unit - I \\
& = & (\dim \M) P_{\unit}(x) - I,
\end{eqnarray*}
where $P_{\unit}: \M \to \M$ denotes the orthogonal projection  onto 1, given by $x \mapsto \inner{x}{\unit} \unit$. 
Since $P_{\unit}$ is a rank-1 projection, its spectrum is precisely $\{0,1\}$, where $0$ has a multiplicity of $\dim(\M) - 1$. Using functional calculus, we get 
\begin{equation}
\sigma(A) = \{ \dim(\M) - 1, \;\; -1 \},
\end{equation} 
where $-1$ has a multiplicity of $\dim(\M) - 1$.
Similarly, we get 
\begin{equation}
\sigma(Q) = \{ 2 \dim(\M) - 2, \; \dim(\M) - 2 \}, 
\end{equation} 
where $\dim(\M) - 2$ has a multiplicity of $\dim(\M) - 1$, and
\begin{equation}
\sigma(L) = \{ \dim(\M) , 0 \}, 
\end{equation} 
where $\dim(\M)$ has a multiplicity of $\dim(\M) - 1$.

\noindent Thus, for an irreflexive complete quantum graph, we have:
\vspace{-0.4cm}
\begin{itemize}[noitemsep]
\item $\lambda_{\max} = \dim \M - 1$, $\lambda_{\min} = - 1$
\item $\gamma_{\max} = 2 \dim(\M) - 2$, $\gamma_{\min} = \dim \M - 2$
\item $\theta_{\max} = \dim \M$
\item $s^+ = (\dim(\M) - 1)^2$, $s^- = \dim(\M) - 1$
\item $n^+ = 1$, $n^- = \dim(\M) -1$
\item $2m = \dim(\M)^2 - \dim(\M)$
\end{itemize}

\noindent On applying these to theorem \ref{main result}, we see that all the five spectral bounds give the same result, namely:
\begin{equation}
\dim(\M) \le \chi_q(K_{\M}).
\end{equation}
The reverse inequality $\chi_q(K_{\M}) \le \dim(\M)$ was proved in \cite{bgh}, and $\chi_q(K_{\M}) = \dim(\M)$.
So, we conclude that all the bounds in theorem \ref{main result} are tight in the case of complete quantum graphs.

\section{Conclusion and future directions} \vspace{-0.3cm}
In this work, we have shown that several spectral lower bounds for the chromatic number of classical graphs 
are also lower bounds for the classical and quantum chromatic numbers of quantum graphs. 
We believe that quantum graph spectral theory is a promising field of study.
As a next step, it would be interesting to find bounds that exhibit a separation between the different variants of chromatic numbers of quantum graphs. Alternatively, investigating examples of quantum graphs that show a separation between these spectral bounds would also be helpful. We hope to explore these in a future work.

\section{Acknowledgments} \vspace{-0.3cm}
The author is grateful to her PhD supervisor, Michael Brannan, for his valuable guidance and insights on this project. The author would also like to thank Larissa Kroell for useful discussions on quantum graph theory. This work was partially supported by NSF grants DMS-2000331 and DMS-1700267.

\section{Appendix} \label{appendix}
\vspace{-0.3cm}

Let $\M$ be a finite dimensional C*-algebra, equipped  with its tracial $\delta$-form $\psi$.
The properties of a quantum graph on $\M \subseteq B(L^2(\M, \psi))$ in the different perspectives is summarized in the following table. 

Here, $p = \sum_{i=1}^t a_i \otimes b_i \in \M \otimes \M^{op}$ and $m, \sigma$ denote the multiplication map and swap map on $\M \otimes \M^{op}$ respectively. Further, $\h = L^2(\M, \psi)$, $T \in B(L^2(\M))$, $ \xi \in L^2(\M)$ and $x,y \in \M'$.

\def\arraystretch{2}
\begin{center}

\begin{tabular}{| p{1.8cm} || p{1.8cm} | p{1.8cm} | p{2.5cm} | p{3.4cm} | p{3.2cm} |}
\hline

\textsc{property} & \textsc{classical graph} & $S \subseteq B(\h)$ & $p \in \M \otimes \M^{op}$ & $A: \M \to \M$ & $P: B(\h) \to B(\h)$ \\ \hline \hline

Bimodule structure & Relations on a set & $\M'S\M' \subseteq S$ & 
$\sum_{i} a_i (xTy) b_i = x(\sum_{i} a_i T b_i )y$ & 
$m(A \otimes xTy) m^* = \newline x (m(A \otimes T ) m^*) y$ & 
$P(xTy) = xP(T)y$ \\ \hline

Schur \newline idempotent & $A \in M_n(\{0,1\})$ & $A \in S$ & $p^2 = p$ & $m(A \otimes A)m^* = \delta^{2} A$  
& $P^2 = P$  \\ \hline

Reflexive & $I \in S_{G}$ & $\M' \subseteq S$ & $m(p) = \unit$ & $m(A \otimes I)m^* =  \delta^{2} I$ & $P(I) = I$ \\ \hline 

Irreflexive & $ I \notin S_G$ & $\M' \perp S$ & $m(p) = 0$ & $m(A \otimes I)m^* = 0$ & $P(I) = 0$ \\ \hline 

Undirected & $A = A^T$ & $S = S^*$ & $\sigma(p) = p$ & 
$(I \otimes \eta^*m )(I \otimes A \otimes I)\newline ( m^* \eta \otimes I) = A $ 
\newline Alternatively,  $A(\xi^*) = [A^*(\xi)]^* $ 
& $P^*(T) = P(T^*)^*$,  \newline {\small ( * denotes adjoint as an operator on Hilbert spaces)}  \\ \hline

Self \newline adjoint & $A = A^*$ &   & $\sigma(p) = p^*$  & $A(\xi) = A^*(\xi)$ &  
$P(T^*) = P(T)^*$ \\ \hline

Real & $A = \overline{A}$ &  & $p^* = p$ & $A(\xi^*) = (A(\xi))^*$ & $P^*(T) = P(T)$ \\ \hline

Positivity & $A$ is C.P &  & $p$ is positive (i.e. $p = g^*g$) & $A$ is completely \newline positive (C.P) &
 $P$ is positive \newline (i.e. $P = G^*G$) \\ \hline

\end{tabular}

\end{center}

In particular, for undirected quantum graphs:
\begin{eqnarray*}
P^2 = P = P^* & \iff & p^2 = p = p^* \\
& \iff & A \mbox{ is Schur-idempotent and real } \\
& \iff & A \mbox{ is Schur-idempotent and self-adjoint.}
\end{eqnarray*}

\bibliographystyle{amsalpha}
\bibliography{spectral_bounds}

\end{document}